\renewcommand{\wr}{\circ}
\newtheorem{thm}{Theorem}[section]
\newtheorem{cor}[thm]{Corollary}
\newtheorem{lem}[thm]{Lemma}
\newtheorem{prop}[thm]{Proposition}
\newtheorem{conj}[thm]{Conjecture}
\theoremstyle{definition}
\renewcommand{\L}{\mathcal{L}}
\tikzstyle{vertex}=[draw,thick,fill=white,circle,inner sep=2pt]
\tikzstyle{full}=[draw,thick,fill=black,circle,inner sep=2pt]
\tikzstyle{empty}=[draw,color=black!20!white,thick,fill=white,circle,inner sep=2pt]
\title{Hadamard diagonalizable graphs of order at most $36$}
\author{Jane Breen\footnote{Ontario Tech University, Oshawa, Ontario, Canada \texttt{jane.breen@ontariotechu.ca}} \and 
Steve Butler\footnote{Iowa State University, Ames, Iowa, USA \texttt{\{butler,lidicky,awnr,sysong\}@iastate.edu}. Butler was supported in part by a Simons Foundation grant (\#427264).
 Lidick\'y supported in part by NSF grant DMS-1855653.
Riasanovsky supported in part by NSF grant DMS-1839918 (RTG).} \and 
Melissa Fuentes\footnote{University of Delaware, Newark, Delaware, USA \texttt{melfue@udel.edu}} \and
Bernard Lidick\'y\footnotemark[2] \and
Michael Phillips\footnote{University of Colorado Denver, Denver, Colorado, USA \texttt{michael.2.phillips@ucdenver.edu}} \and
Alexander W.~N.~Riasanovsky\footnotemark[2] \and
Sung-Yell Song\footnotemark[2] \and 
Ralihe R. Villagr\'{a}n\footnote{Departamento de Matem\'{a}ticas, Centro de Investigaci\'{o}n y de Estudios Avanzados del IPN, Mexico City, Mexico \texttt{rvillagran@math.cinvestav.mx}. Supported in part by CONACyT.} \and
Cedar Wiseman \footnote{University of Wyoming, Laramie, Wyoming, USA \texttt{cwisema3@uwyo.edu}} \and
Xiaohong Zhang \footnote{University of Waterloo, Waterloo, Ontario, Canada \texttt{xiaohong.zhang@uwaterloo.ca}}
}
\date{\today}
\newcommand{\oururl}{\url{http://lidicky.name/pub/hadamard/}}
\begin{document}

\maketitle

\begin{abstract}
If the Laplacian matrix of a graph has a full set of orthogonal eigenvectors with entries $\pm1$, then the matrix formed by taking the columns as the eigenvectors is a Hadamard matrix and the graph is said to be Hadamard diagonalizable.

% We determine all graphs which are Hadamard diagonalizable up through $36$ vertices.  This is done both via an efficient computation given a small Hadamard matrix combined with showing that if $n=8k+4$ then the only Hadamard diagonalizable graphs are $K_n$, $K_{n/2,n/2}$, $2K_{n/2}$, and $nK_1$. 
In this article, we prove that if $n=8k+4$ the only possible Hadamard diagonalizable graphs are $K_n$, $K_{n/2,n/2}$, $2K_{n/2}$, and $nK_1$, and we develop an efficient computation for determining all graphs diagonalized by a given Hadamard matrix of any order. Using these two tools, we determine and present all Hadamard diagonalizable graphs up to order 36. Note that it is not even known how many Hadamard matrices there are of order 36.
\end{abstract}

\textbf{Keywords:} Hadamard matrix, Laplacian matrix, Cayley graph, graph product, experimental mathematics. \\

\textbf{AMS Mathematical Subject Classification:} 05C50 (15B34, 05B20, 05C76, 05C85) \\
\section{Introduction}
A real Hadamard matrix is an $n\times n$ matrix $H$ with entries in $\pm1$ with the property that $H^TH=nI$; in other words, the columns of $H$ are orthogonal. These matrices have been extensively studied, and it is known that a necessary condition for the existence of such a matrix is that $n=1,2$, or is a multiple of $4$.  A well-known and still open problem concerns the question of whether this is sufficient.

\begin{conj}
Hadamard matrices exist for all orders $n$ of the form $n=4k$.
\end{conj}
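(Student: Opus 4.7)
The statement is the Hadamard Conjecture, which is one of the most famous open problems in combinatorial design theory and (as the excerpt itself acknowledges) is still unresolved. Accordingly, any honest ``plan'' can only be a plan to establish the conjecture for as large a family of orders as possible by patching together the known construction techniques, rather than a plan to settle it outright. I would begin by reducing to a catalog of base cases via the Sylvester/Kronecker doubling construction: if $H_1$ and $H_2$ are Hadamard matrices of orders $n_1$ and $n_2$, then $H_1\otimes H_2$ is Hadamard of order $n_1 n_2$. In particular, starting from $H_2=\bigl(\begin{smallmatrix}1&1\\1&-1\end{smallmatrix}\bigr)$, one obtains Hadamard matrices of every order $2^k$, and combining with any known seed multiplies the attainable set enormously.

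Next I would invoke the Paley constructions to handle infinite number-theoretic families. For an odd prime power $q$, the Paley Type~I construction gives a Hadamard matrix of order $q+1$ whenever $q\equiv 3\pmod 4$, using the quadratic residue character on $\mathbb{F}_q$; Paley Type~II gives one of order $2(q+1)$ when $q\equiv 1\pmod 4$. These, together with Sylvester doubling, already cover a dense set of multiples of four. I would then appeal to Williamson's method, which reduces the construction of an order-$4m$ Hadamard matrix to finding four symmetric circulant $\pm1$ matrices $A,B,C,D$ of order $m$ with $A^2+B^2+C^2+D^2=4mI$, assembled via a Williamson array. Beyond these, I would bring in Goethals--Seidel arrays, Baumert--Hall arrays, Turyn sequences, and, for any sporadic residual orders, targeted computer search over suitable combinatorial designs (difference families, supplementary difference sets, $T$-sequences).

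The main obstacle, and the reason the full conjecture remains open, is that none of these methods is uniform: each handles a different arithmetic family, and the union of all known families still misses infinitely many (conjectured but unconstructed) orders. Historically, progress has come by whittling down the smallest open order one instance at a time through ad hoc computer constructions; a genuine proof of the whole conjecture would almost certainly require a fundamentally new idea, for example a probabilistic or algebraic existence argument that does not rely on an explicit construction. My realistic expectation, therefore, is that the proposal above yields Hadamard matrices for every order $n=4k$ \emph{up to some explicit bound} and for every $n=4k$ in certain arithmetic progressions, but that the remaining sporadic cases are the hard part --- indeed, they are the conjecture.
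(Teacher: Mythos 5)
You have correctly identified that this statement is the Hadamard Conjecture, a famous open problem; the paper states it as a conjecture precisely because no proof exists, and it offers none. Your response --- declining to claim a proof and instead surveying the known partial constructions (Sylvester/Kronecker doubling, Paley types I and II, Williamson arrays, Goethals--Seidel and Baumert--Hall arrays, Turyn sequences, and ad hoc computer searches) --- is the honest and accurate assessment, and your description of why these methods fall short of a full proof (each covers only certain arithmetic families, leaving infinitely many unresolved orders) matches the state of the art. Nothing further is expected here, since there is no proof in the paper to compare against.
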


Examples of Hadamard matrices of order $2^k$ were constructed by Sylvester in 1867. Defining $H_0=[1]$, we have
\[H_{k+1} = \left[\begin{array}{rr} H_k & H_k \\ H_k & -H_k\end{array}\right].\]
For example,
\[H_1 = \left[\begin{array}{rr} 1 & 1 \\ 1 & -1 \end{array}\right], \quad H_2 = \left[\begin{array}{rrrr} 1 & 1 & 1 & 1\\ 1 & -1 & 1 & -1 \\ 1 & 1 & -1 & -1 \\ 1 & -1 & -1 & 1 \end{array}\right].\]

Two Hadamard matrices are said to be \emph{equivalent} if we can produce one from the other by some combination of the following operations: permuting rows, permuting columns, negating some subset of rows, negating some subset of columns. A \emph{normalized} Hadamard matrix is one which has every entry in the first row and column equal to $+1$. It is easily seen that every Hadamard matrix is equivalent to a normalized Hadamard matrix, by negating combinations of rows and columns.

A graph $G$ is defined in terms of a set of vertices $V(G)$, and a set of edges $E(G)$ which consist of pairs of vertices. The vertices $u$ and $v$ are said to be \emph{adjacent} if there is an edge $\{u,v\} \in E(G)$. The degree of a vertex $u$, denoted $\deg(u)$, is the number of vertices adjacent to $u$. A graph $G$ is said to be \emph{regular} if all the degrees of the vertices of $G$ are equal. 

Given a graph $G$, the Laplacian matrix $L$ is defined entrywise by
\[
L_{uv}=\begin{cases}\deg(u)&\text{if }u=v,\\
-1&\text{if $u$ adjacent to $v$},\\
0&\text{otherwise,}
\end{cases}
\]
where the notation $L_{uv}$ refers to the $(u,v)$ entry of the matrix $L$. It is common to consider what features of the graph $G$ may be determined via the eigenvalues of the Laplacian. Note that for any graph $G$, the row-sums of the Laplacian matrix are zero; hence it is immediate that $\lambda=0$ is an eigenvalue of $L$, with an eigenvector proportional to the all-ones vector.

In this article, we are interested in graphs whose Laplacian matrix can be diagonalized by a Hadamard matrix; that is, there exists a Hadamard matrix $H$ such that
\[\frac{1}{n} H^T L H = \Lambda,\]
or equivalently,
\[L = \frac{1}{n}H\Lambda H^T,\]
where $\Lambda$ is a diagonal matrix consisting of the eigenvalues of $L$, and noting that $H^{-1} = \frac{1}{n}H^T$. If this is the case, we refer to the graph as a Hadamard diagonalizable graph.
Clearly, this class of graphs corresponds to graphs for which there exists a full set of $\pm1$ orthogonal eigenvectors of the Laplacian matrix; i.e.\ there exists a collection of $n$ eigenvectors which correspond with the columns of a Hadamard matrix. 

Various properties of Hadamard diagonalizable graphs and a partial characterisation of Hadamard diagonalizable cographs were explored by Barik, Fallat, and Kirkland~\cite{HD1}. A special type of Hadamard matrices, called balancedly splittable Hadamard matrices, was introduced and studied by Kharaghani and Suda~\cite{HD3}; in particular, its connection to Hadamard digonalizable strongly regular graphs was made. Johnston, Kirkland, Plosker, Storey, and Zhang~\cite{HD2} showed that a graph is  diagonalizable by a Sylvester's matrix if and only if it is a cubelike graph (a Cayley graph over $\mathbb{Z}_2^d$). In the same paper,  Johnston et al. explored the use of Hadamard diagonalizable graphs in quantum information transfer, where a quantum spin network is represented by a graph and quantum information can transfer between spins. An important notion in quantum information transfer is perfect state transfer. Kay~\cite{PST} showed that a necessary condition for perfect state transfer between vertices $j$ and $k$ of a graph $G$, is that for a real orthogonal matrix $Q$ which diagonalizes $L(G)$, the corresponding entries in its $j$-th row and $k$-th row are either equal to or are the negative of each other. Hadamard diagonalizable graphs certainly satisfy this condition for any pair of vertices, and therefore these graphs are good candidates to admit perfect state transfer. 
A characterization of when a Hadamard diagonalizable graph admits perfect state transfer was given in terms of its eigenvalues and the normalized diagonalization Hadamard matrix in \cite{HD2}. 
Chan, Fallat, Kirkland, Lin, Nasserasr, and Plosker~\cite{CHD} studied complex Hadamard diagonalizable graphs (matrices $H$ with $H^*H = nI$, where the entries can be any complex number of modulus 1 rather than $\pm1$). Properties and constructions of such graphs were considered, as well as when such a graph admits interesting quantum information transfer phenomena. 

Most graphs are not Hadamard diagonalizable.  For example, they must have order $n=1$, $2$ or $4k$ (as Hadamard matrices only exist for these orders), but this is not sufficient. The following conditions are well-known.

\begin{prop}[\cite{HD1,HD2}]\label{prop:regular_even}
Let $G$ be a Hadamard diagonalizable graph. Then $G$ is regular; moreover, all eigenvalues must be even integers.
\end{prop}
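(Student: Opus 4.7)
\medskip

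\noindent\textbf{Proof plan.} The plan is to work from the spectral decomposition
\[
L \;=\; \tfrac{1}{n}H\Lambda H^T \;=\; \tfrac{1}{n}\sum_{k=1}^{n}\lambda_k\,h_k h_k^T,
\]
where $h_1,\dots,h_n$ denote the columns of $H$, and exploit two facts throughout: each $h_k\in\{\pm 1\}^n$, and $\mathbf{1}$ is an eigenvector of $L$ with eigenvalue $0$.

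\medskip

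\noindent\textbf{Regularity.} Reading off the diagonal of the decomposition, the degree of vertex $u$ is
\[
d_u \;=\; L_{uu} \;=\; \tfrac{1}{n}\sum_{k=1}^n \lambda_k\,(h_k)_u^{\,2} \;=\; \tfrac{1}{n}\sum_{k=1}^n \lambda_k,
\]
since $(h_k)_u^{\,2}=1$ for every $u$ and every $k$. The right-hand side is independent of $u$, so every vertex has the same degree and $G$ is regular.

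\medskip

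\noindent\textbf{Eigenvalues are even integers.} For any column $h_k$ associated with a nonzero eigenvalue $\lambda_k$, orthogonality of $h_k$ to the $0$-eigenvector $\mathbf{1}$ forces $\sum_i (h_k)_i = 0$, so $h_k$ has equal numbers of $+1$ and $-1$ entries. The key move is to form
\[
\chi_k \;:=\; \tfrac{1}{2}(h_k + \mathbf{1}) \;\in\; \{0,1\}^{n},
\]
the indicator of the positions where $h_k=+1$. Using $L\mathbf{1}=0$,
\[
L\chi_k \;=\; \tfrac{1}{2}Lh_k + \tfrac{1}{2}L\mathbf{1} \;=\; \tfrac{\lambda_k}{2}\,h_k.
\]
The left-hand side lies in $\mathbb{Z}^n$ because both $L$ and $\chi_k$ are integer-valued, while the right-hand side equals $\lambda_k/2$ times a $\pm 1$ vector. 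Comparing any single coordinate yields $\lambda_k/2\in\mathbb{Z}$, i.e.\ $\lambda_k$ is an even integer. For $\lambda_k=0$ the conclusion is trivial.

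\medskip

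\noindent\textbf{Obstacles.} There is essentially no hard step here; the main thing to notice is the half-shift $\chi_k=(h_k+\mathbf{1})/2$, which converts a $\pm 1$ eigenvector into a $\{0,1\}$-vector while keeping the eigenvalue relation on the same footing as an integer matrix equation. A small point worth checking is that, since we never need a specific column of $H$ to equal $\mathbf{1}$, the argument works even if $G$ is disconnected: we only use that every column $h_k$ with $\lambda_k\ne 0$ is perpendicular to $\mathbf{1}$, which follows from $\mathbf{1}$ being in the $0$-eigenspace and $H$ having orthogonal columns grouped by eigenvalue.
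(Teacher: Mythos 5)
Your proof is correct. The regularity argument is essentially the paper's: both read off the diagonal of $L=\tfrac1n\sum_k\lambda_k h_kh_k^T$ and use $(h_k)_u^2=1$. For the evenness of the eigenvalues, however, you take a genuinely different (and arguably cleaner) route. The paper's proof goes through its Proposition~\ref{prop:unique}: after normalizing $H$ it explicitly inverts $\tfrac1n\widehat H$ to get $(\lambda_2,\ldots,\lambda_n)^T=(\widehat H^T-J)(L_{12},\ldots,L_{1n})^T$, and then observes that the entries of $\widehat H^T-J$ lie in $\{0,-2\}$ while the $L_{1j}$ lie in $\{0,-1\}$. You instead apply $L$ to the $\{0,1\}$ indicator $\chi_k=\tfrac12(h_k+\mathbf 1)$ and compare integrality of the two sides of $L\chi_k=\tfrac{\lambda_k}{2}h_k$. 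Both arguments ultimately exploit the same shift from $\pm1$ entries to a lattice of even differences, but yours needs no normalization of $H$, no inverse computation, and handles the disconnected case explicitly (your observation that $h_k\perp\mathbf 1$ for $\lambda_k\neq 0$ follows even more directly from the fact that eigenvectors of the symmetric matrix $L$ for distinct eigenvalues are orthogonal). What the paper's longer detour buys is Proposition~\ref{prop:unique} itself --- that the first row of $L$ determines all the eigenvalues --- which is the engine of their search algorithm; your argument proves the proposition but not that stronger structural fact.
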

We reproduce the proof of regularity here, and give an alternate proof that the eigenvalues must be even integers in Section~\ref{sec:computational_tool}.\\ 
%{Why do we want to provide this alternate proof?}???
%Gives a flavour of our general approach to the problem. So is a good "warmup" to help people get started in understanding what is going on. -- Steve
\begin{proof}[Proof that the graph is regular]
Let $G$ be a Hadamard diagonalizable graph of order $n$, let $L$ be its Laplacian matrix, and let $H$ be a Hadamard matrix which diagonalizes $L$. The degrees of the vertices of the graph correspond to the diagonal entries of $L$.  Now let $h_k$ denote the $k$-th column of $H$, let $\lambda_1,\ldots,\lambda_n$ be the eigenvalues of $L$, and let $\Lambda = \text{diag}(\lambda_1, \cdots, \lambda_n)$. Then we have
\[
L=\frac1nH\Lambda H^T=\frac1n\sum_{k=1}^n\lambda_kh_kh_k^T.
\]
Since the diagonal entries of $h_k h_k^T$ are all equal to 1, the right hand side is a sum of matrices which all have constant diagonal. Hence $L$ has constant diagonal, and $G$ is regular.
\end{proof}

We will also make use of the following result.

\begin{prop}[\cite{HD1}]
A graph $G$ is Hadamard diagonalizable if and only if $G^c$ (the complement of $G$) is Hadamard diagonalizable.
\end{prop}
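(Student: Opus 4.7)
The plan is to use the identity $L(G) + L(G^c) = nI - J$, where $J = \mathbf{1}\mathbf{1}^T$ is the all-ones matrix, together with the fact that complementation is an involution so it suffices to prove one implication. Suppose $G$ is Hadamard diagonalizable, and (crucially) that we can choose a diagonalizing Hadamard matrix $H$ with $\mathbf{1}$ as its first column; then $H^T\mathbf{1} = n\,e_1$, so
\[
H^T J H \;=\; (H^T\mathbf{1})(H^T\mathbf{1})^T \;=\; n^2\, e_1 e_1^T
\]
is diagonal, and consequently
\[
H^T L(G^c) H \;=\; n^2 I - n^2\, e_1 e_1^T - H^T L(G) H
\]
is a difference of three diagonal matrices, showing that $H$ diagonalizes $L(G^c)$ as well.

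The real work lies in justifying that $H$ can be arranged to have $\mathbf{1}$ as a column. By Proposition~1.1, $G$ is regular, so $\mathbf{1} \in \ker L(G)$. When $G$ is connected, $\ker L(G)$ is one-dimensional, so the unique column of any diagonalizing Hadamard matrix that lies in it must be $\pm\mathbf{1}$; negating that column preserves both the Hadamard property and the diagonalization, and places us in the desired situation. When $G$ is disconnected, with components $C_1,\ldots,C_k$ of sizes $n_1,\ldots,n_k$, I would record the $k$ kernel-columns of $H$ as a matrix $E \in \{\pm 1\}^{k\times k}$, where $E_{aj}$ is the (constant) value of the $j$-th kernel-column on $C_a$. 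Translating the orthogonality of these columns gives $E^T D E = n I_k$ with $D = \operatorname{diag}(n_1,\ldots,n_k)$. Setting $F = D^{1/2} E$ yields $F^T F = nI$, hence $F F^T = nI$, so $E E^T = n D^{-1}$; comparing diagonal entries forces $k = n/n_a$ for every $a$, i.e., $n_a = n/k$. Thus $D = (n/k) I_k$ and $E$ is itself a $k \times k$ Hadamard matrix, which can be row-normalized (by negating the rows where its first column equals $-1$) into a Hadamard matrix $E'$ whose first column is $\mathbf{1}$. Replacing the kernel-columns of $H$ by the basis of $\ker L(G)$ encoded by $E'$, while leaving the columns of $H$ in the nonzero eigenspaces unchanged, yields the desired diagonalizing Hadamard matrix.

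The main obstacle is the disconnected case, and in particular the step establishing that all components of a Hadamard diagonalizable graph have equal size; once this equality is in hand, the argument reduces to routine row-normalization of a $k \times k$ Hadamard matrix together with the clean block-diagonal computation described above.
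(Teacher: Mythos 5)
Your proof is correct. The paper dispatches this proposition with a single sentence --- ``the eigenspaces for the Laplacian matrix of a graph and its complement are the same'' --- which rests on the same identity $L(G)+L(G^c)=nI-J$ that you use, but which, read literally, hides a genuine issue: a Hadamard matrix $H$ diagonalizing $L(G)$ need \emph{not} diagonalize $L(G^c)$, since a kernel column of $H$ that is neither proportional to $\mathbf{1}$ nor orthogonal to it is an eigenvector of $L(G)$ but not of $L(G^c)$. This really happens --- e.g.\ for $4K_2$ one can build a diagonalizing $H$ whose four kernel columns are encoded by a $4\times 4$ Hadamard matrix all of whose column sums are nonzero, and that $H$ fails to diagonalize $K_{2,2,2,2}$. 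Your normalization step is exactly the missing content: the connected case is the easy sign flip, and in the disconnected case your computation $E^TDE=nI_k\Rightarrow EE^T=nD^{-1}$ forcing $n_a=n/k$ and $E$ to be a $k\times k$ Hadamard matrix (hence row-normalizable) is a clean and correct way to produce a diagonalizing $H$ containing the column $\mathbf{1}$. In effect you have reproved Lemma 4 of \cite{HD1}, which the paper invokes elsewhere (Section 3) but silently relies on here; so the route is the same as the paper's, but carried out in full rather than by appeal to the reference, with the equal-component-size fact as a nice byproduct.
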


This follows immediately by noting that the eigenspaces for the Laplacian matrix of a graph and its complement are the same (although the eigenvalues are different).

Previous research into Hadamard diagonalizable graphs has characterized Hadamard diagonalizable graphs up through order $n=12$ (see \cite{HD1}), as well as all Hadamard diagonalizable graphs for the Sylvester construction for Hadamard matrices of order $2^k$ \cite{HD2}.  The goal of this current paper is to develop further tools to determine the Hadamard diagonalizable graphs of a given order, and to then list all Hadamard diagonalizable graphs up through order $n=36$.
We prove that for $n=8k+4$, there are only four Hadamard diagonalizable graphs of order $n$ in Section~\ref{sec:theory}, and we develop computational tools to search for all possible Hadamard diagonalizable graphs of small order in Section~\ref{sec:computational_tool}.  Information about the Hadamard diagonalizable graphs is given in Section~\ref{sec:small_graphs}.  Concluding comments will be given in Section~\ref{sec:conclusion}.

In Table~\ref{tab:data} we summarize the number of Hadamard diagonalizable graphs as well as the number of inequivalent Hadamard matrices of the indicated order.

\begin{table}
\centering
\caption{The order, number of non-equivalent Hadamard matrices (H.\ matrices), and the number of Hadamard diagonalizable graphs (H.\ graphs)}
\label{tab:data}
\begin{tabular}{|r|r|r|}\hline 
Order&H.\ matrices&H.\ graphs\\ \hline
$4$	    &	$1$          &  $4$ \\ \hline
$8$	    &	$1$          &  $10$ \\ \hline
$12$	&	$1$          &  $4$ \\ \hline
$16$	&	$5$          &  $50$ \\ \hline
$20$	&	$3$          &  $4$ \\ \hline
$24$	&	$60$         &  $26$ \\ \hline
$28$	&	$487$        &  $4$ \\ \hline
$32$	&	13,710,027   &  10,196 \\ \hline
$36$	&	(unknown)      &  $4$ \\ \hline
\end{tabular}
\end{table}

\section{Hadamard diagonalizable graphs  of order $n=8k+4$}\label{sec:theory}
We show that for order $n=8k+4$ there are at most four possible graphs which are Hadamard diagonalizable.   We start with the following graph characterization property.

\begin{lem}\label{lemma:four_path}
Suppose $G$ is a connected graph on $n$ vertices. Then $G$ is a complete graph or a complete bipartite graph if and only if the following condition holds: for any four distinct vertices $\{u,v,w, x\}\subseteq V(G)$ 
if $uv,vw,wx \in E(G)$ then $xu \in E(G)$.
%with $u$ adjacent to $v$, $v$ adjacent to $w$, $w$ adjacent to $x$, it must be that $x$ is adjacent to $u$.
\end{lem}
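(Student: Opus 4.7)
The plan is to prove the two directions separately. The forward direction—that complete graphs and complete bipartite graphs satisfy the stated condition—is a direct check: in $K_n$ every pair is adjacent, and in a complete bipartite graph with parts $A, B$ any three consecutive edges $uv, vw, wx$ force $u,w$ into one side and $v,x$ into the other, so $xu$ lies across the bipartition and is an edge.

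For the reverse direction, I will assume $G$ is connected and satisfies the hypothesis, and show it must be complete or complete bipartite. The first step is to establish that $\mathrm{diam}(G)\le 2$: if a shortest $a$--$b$ path had length $\ge 3$, say $a=c_0, c_1, c_2, c_3, \dots$, then applying the hypothesis to the four distinct vertices $c_0, c_1, c_2, c_3$ would produce the chord $c_0 c_3$, contradicting minimality. In particular, any non-adjacent pair will share a common neighbour.

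Next, I will define the relation $u \sim v$ on $V(G)$ by $u=v$ or $uv \notin E(G)$ and prove it is an equivalence relation. Reflexivity and symmetry are immediate, so the main obstacle is transitivity: showing that if $ab, bc \notin E$ with $a\neq c$, then $ac \notin E$. Suppose for contradiction that $ac \in E$; the diameter-$2$ conclusion provides a common neighbour $d$ of $a$ and $b$, and then either $d=c$ (contradicting $bc\notin E$) or $\{a,b,c,d\}$ are four distinct vertices, and the walk $c, a, d, b$ combined with the hypothesis forces $bc \in E$, again a contradiction.

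Once $\sim$ is verified to be an equivalence relation, its classes $V_1, \dots, V_k$ will witness that $G$ is complete multipartite: edges appear exactly between distinct classes. The final step is to rule out $k\ge 3$ with some part of size at least $2$. If $a_1, a_2 \in V_i$ are distinct and $b\in V_j$, $c\in V_\ell$ lie in two other classes, the walk $a_1, b, c, a_2$ consists of four distinct vertices with all three required edges present, so the hypothesis forces $a_1 a_2 \in E$, contradicting $a_1 \sim a_2$. Therefore either every class is a singleton, giving $G=K_n$, or $k \le 2$, giving $G=K_{|V_1|,|V_2|}$ (with $K_1$ as the degenerate $k=1$ case, possible only when $n=1$ since $G$ is connected).
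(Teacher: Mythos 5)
Your proof is correct, but it takes a genuinely different route from the paper's. The paper splits on the girth of $G$: in the acyclic case it deduces $G$ is a star; if the girth is $3$ it takes a maximal clique $U$ and shows no edge can leave $U$; if the girth is $4$ it takes a maximal induced complete bipartite subgraph and argues the same way. You instead first derive $\operatorname{diam}(G)\le 2$ from the four-vertex condition, then show that non-adjacency (together with equality) is an equivalence relation --- the transitivity argument via a common neighbour $d$ of $a$ and $b$ and the walk $c,a,d,b$ is the key step, and it checks out, including the verification that $d\notin\{a,b,c\}$ --- so that $G$ is complete multipartite; finally you use the condition once more on a walk $a_1,b,c,a_2$ to forbid three or more parts when some part has two vertices, leaving exactly $K_n$ and $K_{|V_1|,|V_2|}$. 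Your approach buys a cleaner, case-free structural statement (complete multipartite) as an intermediate step and avoids the maximality arguments, which in the paper's girth-$4$ case are stated somewhat awkwardly; the paper's approach is more local and generalizes the ``no edge leaves a maximal clique'' style of argument. Both are complete and elementary; yours is arguably the tidier write-up.
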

\begin{proof}
Assume that $G$ satisfies the stated condition for all distinct vertices $u,v,w,x\in V(G)$. 

If $G$ is acyclic, then $G$ contains no path on three edges, so $G$ is a star---that is, the complete bipartite graph $K_{1, n-1}$.

Now suppose that $G$ is not acyclic. Then the girth of $G$ (the length of a shortest cycle in $G$) is either $3$ or $4$.

Suppose the girth of $G$ is $3$, and let $U$ be a maximal clique.  Then $|U|\geq 3$.   Suppose for contradiction that $uv\in E(G)$ such that $u\in U$ and $v\not\in U$.  Since $v\not\in U$, there exists $x \in U$ such that $xv \not\in E(G)$.
Since $|U| \geq 3$, there exists $y \in U \setminus\{u,x\}$.
Then $xyuv$ is a path of length 3. By assumption, $xv$ are adjacent, which is a contradiction.
%By assumption, there exist two distinct vertices $x,y\in U$ such that $x,y\neq u$.  	Then $xyuv$ and $yxuv$ are paths of length $3$; hence $vx,vy\in E(G)$. In particular, all vertices in $U$ are adjacent to $v$, but this contradicts that $U$ is a maximal clique.  
So it must be the case that no other vertices in $V(G)\setminus U$ are connected to a vertex in $U$; since $G$ is connected, we can conclude that $G$ is a complete graph. 

Now suppose the girth of $G$ is $4$, and let $U$ be a maximal induced complete bipartite subgraph of $G$, with bipartition $U = U_1\cup U_2$ such that $|U_1|,|U_2|\geq 2$.  	
By symmetry of $U_1$ and $U_2$, suppose for contradiction $uv\in E(G)$ where $u\in U_1$ and $v\not\in U$. 
If $v$ was adjacent to any vertex $z\in U_2$, there would be a triangle $uvz$, violating the girth condition. By the maximality of $U_2$, $v \not\in U_2$ because there exists $x \in U_1$ such that $xv \notin E(G)$.
Pick any $w \in U_2$. 
The path $vuwx$ of length 3 implies that $xv \in E(G)$, which is a contradiction.
%Suppose $uv\in E(G)$ where $u\in U$ and $v\in V(G)\setminus U$.  Then without loss of generality, $u\in U_1$ and there exists a vertex $x\in U_2$.  Then for every $w\in U_1$, $vuxw$ is a path of length $3$ in $G$, hence $vw\in E(G)$.  Since $G$ is triangle-free, $vy\notin E(G)$ for all $y\in U_2$, but this contradicts that $U$ is a maximal induced complete bipartite subgraph.  
So it must be the case that no other vertices in $G$ are connected to a vertex in $U$; and since $G$ is connected we can conclude that $G$ is a complete bipartite graph.  

The reverse implication holds by inspection.
\end{proof}

We can now use this characterization of graphs to establish the possible Hadamard diagonalizable graphs of order $n=8k+4$.

%The next result works without the hypothesis of having the first column consisting of all $1$s because $n\neq 0$ (?)
\begin{thm}
Let $G$ be a graph of order $n$. If $n=8k+4$ and $G$ is Hadamard diagonalizable, then $G\in \{K_n, K_{n/2,n/2}, nK_1, 2K_{n/2}\}$.  
\end{thm}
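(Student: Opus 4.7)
My plan is to split into two cases based on connectedness. First, I show that if $n=8k+4$ and $G$ is Hadamard diagonalizable, then $G$ contains no induced $P_4$, via an eigenvalue-parity argument using Fourier analysis on $\{\pm 1\}^3$. Combined with Lemma~\ref{lemma:four_path} applied componentwise, this forces every connected component of $G$ to be $K_m$ or $K_{m/2,m/2}$. A divisor / Hadamard-order analysis then enumerates the disconnected possibilities.

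For the induced-$P_4$ obstruction, suppose for contradiction that $u$--$v$--$w$--$x$ is an induced path in $G$, and let $H$ diagonalize $L(G)$ with eigenvalues $\lambda_1,\ldots,\lambda_n$. Partition the column indices $\{1,\ldots,n\}$ according to the triple $(H_{uk}H_{vk},H_{uk}H_{wk},H_{uk}H_{xk})\in\{\pm 1\}^3$, and for each $(b,c,d)$ let $\gamma_{(b,c,d)}$ be the sum of $\lambda_k$ over columns in the block indexed by $(b,c,d)$. By Proposition~\ref{prop:regular_even} each $\lambda_k$ is a non-negative even integer, so each $\gamma_{(b,c,d)}$ is as well. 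The identity $L_{yz}=\tfrac{1}{n}\sum_k\lambda_k H_{yk}H_{zk}$, applied to the six pairs from $\{u,v,w,x\}$, together with $\sum_k\lambda_k=nd$, yields seven linear equations in the eight $\gamma_{(b,c,d)}$. In the Fourier basis on $\{\pm 1\}^3$ these determine all but one coefficient: the constant coefficient equals $nd$; the coefficients at $b$, $bc$, $cd$ each equal $-n$ (from the three edges $uv$, $vw$, $wx$); and the coefficients at $c$, $d$, $bd$ all vanish (from the non-edges $uw$, $ux$, $vx$). Writing $T$ for the remaining coefficient at $bcd$, Fourier inversion yields
\[
\gamma_{(b,c,d)}=\tfrac{1}{8}\bigl(nd-n(b+bc+cd)+T\cdot bcd\bigr).
\]
Evaluating at $(+,-,-)$ and $(+,+,+)$ gives
\[
\gamma_{(+,-,-)}-\gamma_{(+,+,+)}=\tfrac{n}{4}.
\]
This expresses the odd integer $n/4=2k+1$ as a difference of two even integers, a contradiction. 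Hence $G$ has no induced $P_4$.

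For the disconnected case, Lemma~\ref{lemma:four_path} applied componentwise forces every connected component of $G$ to be $K_m$ or $K_{m/2,m/2}$. Regularity of $G$ then forces all components to share a common degree. Matching the degrees $m-1$ of $K_m$ with $m'/2$ of $K_{m'/2,m'/2}$ is impossible under the even-eigenvalue condition, which forces $m$ even (hence $m-1$ odd) in the complete case and $m\equiv 0\pmod 4$ (hence $m'/2$ even) in the bipartite case; so mixed component types are excluded. Thus $G=cK_m$ or $G=cK_{m/2,m/2}$. In either case the kernel of $L(G)$ has dimension $c$ and is spanned by the component-indicator vectors, so admitting a $\pm1$ orthogonal basis requires a Hadamard matrix of order $c$. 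Using $n=4(2k+1)$, the divisors $c\mid n$ for which both a Hadamard matrix exists and the resulting eigenvalues are all even are exactly $c\in\{1,2,n\}$ in the complete case (yielding $K_n$, $2K_{n/2}$, and $nK_1$) and $c=1$ in the bipartite case (yielding $K_{n/2,n/2}$). These are exactly the four graphs in the statement.

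The main obstacle is organising the seven linear constraints on $\gamma$ so that the Fourier structure on $\{\pm 1\}^3$ is transparent; once this is in hand, both the explicit formula for $\gamma_{(b,c,d)}$ and the parity contradiction follow immediately.
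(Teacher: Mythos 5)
Your parity computation is correct as far as it goes, and it is really the paper's argument in disguise: the Fourier coefficient combination $\hat b+\hat{bc}+\hat{cd}+\hat d$ is $\sum_k\lambda_k\bigl((h_k)_u+(h_k)_w\bigr)\bigl((h_k)_v+(h_k)_x\bigr)$, which is divisible by $8$ and equals $n(L_{uv}+L_{vw}+L_{wx}+L_{ux})$. But there is a genuine gap in how you use it. You only rule out \emph{induced} $P_4$'s, whereas Lemma~\ref{lemma:four_path} requires the stronger hypothesis that \emph{every} path $u$--$v$--$w$--$x$ (not necessarily induced) has $ux\in E(G)$. Forbidding induced $P_4$'s only makes each component a cograph, and regular connected cographs need not be complete or complete bipartite: for $n=12$, the graphs $K_{4,4,4}$, $K_{3,3,3,3}$, and $CP_{12}$ are regular, connected, and $P_4$-free, yet are none of the four graphs in the statement. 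Your argument as written does not exclude them, so the appeal to Lemma~\ref{lemma:four_path} "applied componentwise" does not follow from what you proved. Concretely, in $K_{4,4,4}$ one has paths $uvwx$ with $ux\notin E$ but $uw,vx\in E$; in your own notation, setting $\epsilon_1=-L_{uw}$ and $\epsilon_2=-L_{vx}$, the difference $\gamma_{(+,-,-)}-\gamma_{(+,+,+)}$ becomes $n(1+\epsilon_1+\epsilon_2)/4$, which is even when exactly one of $uw,vx$ is an edge, so your chosen pair of cells gives no contradiction there.

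The fix is to discard the assumptions on $uw$ and $vx$ entirely and work with the single identity $-3n=n(L_{uv}+L_{vw}+L_{wx}+L_{ux})=\sum_k\lambda_k\bigl((h_k)_u+(h_k)_w\bigr)\bigl((h_k)_v+(h_k)_x\bigr)\equiv 0\pmod 8$, forcing $8\mid n$; this is exactly the paper's proof and it only needs a length-$3$ path with non-adjacent endpoints, which is what the lemma supplies. Two further remarks on your second half: the paper handles disconnectedness far more economically by passing to the complement (which is Hadamard diagonalizable whenever $G$ is and is connected whenever $G$ is not), so the forbidden path lives in $G$ or $G^c$ and the same parity argument applies; your component-by-component analysis (common degree, parity of degrees separating clique components from bipartite ones, and the kernel of $L$ forcing a Hadamard matrix of order equal to the number of components) appears sound in itself, but it rests on the flawed structural step and is considerably longer than necessary.
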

\begin{proof}
Suppose for the sake of contradiction that $n=8k+4$, $G$ is a Hadamard diagonalizable graph of order $n$, and $G\notin\{K_n, K_{n/2,n/2}, nK_1, 2K_{n/2}\}$. Let $L$ be the Laplacian matrix of $G$. Then there exists a diagonal matrix $\Lambda = \text{diag}(\lambda_1, \ldots, \lambda_n)$, where $\lambda_k$ is an eigenvalue of $L$ and $\lambda_k$ is an even integer, for all $k = 1, \ldots, n$ (see Proposition~\ref{prop:regular_even}) and 
\[
L = \frac1n H\Lambda H^T = \frac1n\sum_{k=1}^n\lambda_kh_kh_k^T,
\]
for some $n\times n$ Hadamard matrix $H$. 
For any $i,j\in \{1, \ldots, n\}$, 
\[
L_{ij}  =  \dfrac1n \sum_{k=1}^n \lambda_k (h_{k})_i(h_{k})_j  
\]
where the notation $(h_k)_i$ refers to the $i$-th entry of the vector $h_k$.

If $G$ is not connected, then the complement $G^c$ is connected. By Lemma~\ref{lemma:four_path}, then, we have that $G$ or $G^c$ contains a path of length $3$ whose endpoints are not adjacent (note that since $G$ must be regular, the only possible connected complete bipartite graph is $K_{n/2,n/2}$).  Without loss of generality, we assume $uvwx$ is a path of length $3$ in $G$. 
Since $L_{uv} = L_{vw} = L_{wx} = -1$ and $L_{ux} = 0$, we have 
\begin{align*}
-3n 
&= n(L_{uv} + L_{vw} + L_{wx} + L_{ux}) \\
&= \sum_{k=1}^n\lambda_k((h_{k})_u(h_{k})_v + (h_{k})_v(h_{k})_w + (h_{k})_w(h_{k})_x + (h_{k})_u(h_{k})_x)\\
&= \sum_{k=1}^n\lambda_k((h_{k})_u+(h_{k})_w)((h_{k})_v+(h_{k})_x).  
\end{align*}
Since each $\lambda_k$ is even and each $h_{ij}\in\{-1,1\}$, it follows that each term in the sum is divisible by $8$, meaning that $8$ divides the right hand side.  This implies that $n$ is a multiple of $8$.  But that contradicts the assumption that $n=8k+4$, concluding the proof. 
\end{proof}

The preceding result shows that if a graph is Hadamard diagonalizable of order $n=8k+4$ it must be one of the graphs mentioned.  We now must argue that all four of these graphs are realizable.

\begin{prop}\label{th:plus4}
If $n$ is even and there exists a Hadamard matrix of order $n$, then the graphs $K_n, K_{n/2,n/2}, nK_1,$ and $ 2K_{n/2}$ are Hadamard diagonalizable.
\end{prop}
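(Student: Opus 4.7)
The plan is to reduce the four cases to two via complementation, handle one trivially, and construct the diagonalization of $L(2K_{n/2})$ by massaging an arbitrary Hadamard matrix of order $n$.

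First, observe that $K_n$ is the complement of $nK_1$ and $K_{n/2,n/2}$ is the complement of $2K_{n/2}$. Since Hadamard diagonalizability is preserved under complementation (as the Laplacian eigenspaces coincide with those of the complement), it suffices to prove that $nK_1$ and $2K_{n/2}$ are Hadamard diagonalizable. The graph $nK_1$ is immediate: its Laplacian is the zero matrix, and for any Hadamard matrix $H$ of order $n$, $\frac{1}{n}H^T \cdot 0 \cdot H = 0$ is diagonal.

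The interesting case is $2K_{n/2}$. Partitioning the vertex set so that the two copies of $K_{n/2}$ are indexed by the first and last $n/2$ coordinates, the Laplacian is block diagonal with two copies of $\tfrac{n}{2}I - J_{n/2}$ on the diagonal. A short computation shows that the $0$-eigenspace is spanned by $(\mathbf{1},\mathbf{0})$ and $(\mathbf{0},\mathbf{1})$ (equivalently, by $(\mathbf{1},\mathbf{1})$ and $(\mathbf{1},-\mathbf{1})$), while the $(n/2)$-eigenspace consists of all vectors $(a,b)$ with $\mathbf{1}^T a = \mathbf{1}^T b = 0$. So to prove $2K_{n/2}$ is Hadamard diagonalizable, I need to produce a Hadamard matrix of order $n$ whose first two columns are $(\mathbf{1},\mathbf{1})$ and $(\mathbf{1},-\mathbf{1})$ and whose remaining columns each have zero sum on the first $n/2$ coordinates and zero sum on the last $n/2$ coordinates.

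Given any Hadamard matrix $H$ of order $n$ (which exists by hypothesis), I would first normalize so that the first row and column are all $+1$; thus column 1 is $(\mathbf{1},\mathbf{1})$. Then select any column $j \neq 1$: it has exactly $n/2$ entries equal to $+1$ and $n/2$ equal to $-1$ (by orthogonality with column 1), so permuting rows to place all $+1$s of column $j$ into the top half produces a new Hadamard matrix (row permutations preserve Hadamardness) whose columns 1 and 2 are $(\mathbf{1},\mathbf{1})$ and $(\mathbf{1},-\mathbf{1})$. The key verification is that every remaining column $h_k = (a_k, b_k)$ automatically satisfies $\mathbf{1}^T a_k = \mathbf{1}^T b_k = 0$: orthogonality of $h_k$ with column 1 gives $\mathbf{1}^T a_k + \mathbf{1}^T b_k = 0$, and orthogonality with column 2 gives $\mathbf{1}^T a_k - \mathbf{1}^T b_k = 0$, and solving these yields the claim. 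Hence every column of this massaged Hadamard matrix is a $\pm 1$ eigenvector of $L(2K_{n/2})$, proving the proposition.

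The only genuine step is this row-permutation construction, and its correctness hinges on the simultaneous orthogonality to columns 1 and 2 forcing the block-sum conditions; I do not anticipate any real obstacle.
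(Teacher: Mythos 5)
Your proof is correct and takes essentially the same route as the paper: both hinge on arranging a Hadamard matrix whose first two columns are $(\mathbf{1},\mathbf{1})$ and $(\mathbf{1},-\mathbf{1})$ (a step the paper simply asserts and you justify via normalization and a row permutation), and then exploiting orthogonality with those two columns. Your column-by-column eigenvector verification is just the dual formulation of the paper's identity $L=\tfrac12\sum_k h_kh_k^T-\tfrac12 h_1h_1^T-\tfrac12 h_2h_2^T$, so no substantive difference remains.
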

\begin{proof}
Given that there exists a Hadamard matrix of order $n$ we may assume that there is a Hadamard matrix $H$ where $h_1$ is the all $1$s vector and $h_2$ is $1$ in entries $1,\ldots,n/2$ and $-1$ in entries $(n/2+1),\ldots,n$.  It suffices to show how to write $L$ as a linear combination of the projection matrices $h_kh_k^T$ for the graphs $K_n$ and $2K_{n/2}$ (since this will have the Laplacian with the correct eigenvalues).

For $G=K_n$ we have
\[
L=\sum_{k=1}^nh_kh_k^T-h_1h_1^T,
\]
since the sum becomes $nI$ and $h_1h_1^T$ is the all-ones matrix which we denote by $J$.

For $G=2K_{n/2}$ we have
\[
L=\frac12\sum_{k=1}^nh_kh_k^T-\frac12h_1h_1^T-\frac12h_2h_2^T,
\]
since the sum becomes $\frac{n}2I$ and the last two terms combine to give $-\big({J\atop O}\,{O\atop J}\big)$.
\end{proof}

\section{Finding all graphs diagonalizable by a given Hadamard matrix}\label{sec:computational_tool}

In this section, we describe a procedure to search for and produce Hadamard diagonalizable graphs. In particular, given a Hadamard matrix $H$, we give an algorithm by which all graphs which are diagonalized by $H$ are produced. We assume that $H$ is a normalized Hadamard matrix, since every graph which is Hadamard diagonalizable is also diagonalized by a normalized Hadamard matrix (see \cite[Lemma 4]{HD1}). We note that two  inequivalent normalized Hadamard matrices may produce the same graph via this procedure. 

\subsection{An algorithmic procedure}
% After a theoretical description and justification for this procedure, we describe how this was implemented. Note that for orders which are multiples of four larger than 36, a classification of all Hadamard matrices is not known. However, we apply this procedure to all even multiples of four (since the odd-multiples case has been characterized in the previous section) and present the data in Section~\ref{sec:small_graphs}.

Our Hadamard matrices $H$ will be assumed to be normalized Hadamard matrices which have the form
\[
H = \left[\begin{array}{c|c}1&1\cdots1\\\hline
1&\\
\vdots&\widehat H\\
1 \\
\end{array}\right]
\]
with $\widehat H$ a $\pm1$ matrix.  It is easily seen that every Hadamard matrix is equivalent to a matrix of this form by negating a combination of the rows and columns.    

%We will think of the columns of $H$ as the eigenvectors of $L$, and the first column of $H$, the all-ones vector, will correspond with the eigenvalue $0$ of the Laplacian matrix.  We will let $\Lambda$ be the diagonal matrix with diagonal entries $\lambda_1=0,\lambda_2,\ldots,\lambda_n$.
%%??{incorporating with the following proposition}

\begin{prop}\label{prop:unique}
Let $G$ be a Hadamard diagonalizable graph with its Laplacian matrix $L$. Let $H$ be a normalized Hadamard matrix diagonalizing $L$. Let $\Lambda$ be the diagonal matrix with its diagonal entries $\lambda_1=0, \lambda_2, \dots, \lambda_n$, the eigenvalues of $L$ corresponding to the columns of $H$ as their associated eigenvectors.  Then the entries $L_{12},\ldots,L_{1n}$ uniquely determine  $\lambda_2,\ldots,\lambda_n$.
\end{prop}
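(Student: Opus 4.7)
My plan is to derive a linear system that relates the unknown eigenvalues $\lambda_2, \ldots, \lambda_n$ to the known first-row off-diagonal entries $L_{12}, \ldots, L_{1n}$, and then show that the coefficient matrix of this system is invertible.

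First, I would use the spectral decomposition
\[
L = \frac{1}{n} H \Lambda H^T = \frac{1}{n} \sum_{k=1}^n \lambda_k h_k h_k^T,
\]
so that $L_{1j} = \frac{1}{n} \sum_{k=1}^n \lambda_k (h_k)_1 (h_k)_j$. Since $H$ is normalized, $(h_k)_1 = 1$ for every $k$. Moreover, the first column $h_1$ is the all-ones vector, which is always an eigenvector of a Laplacian matrix with eigenvalue $0$, so $\lambda_1 = 0$. Substituting yields
\[
L_{1j} = \frac{1}{n} \sum_{k=2}^n \lambda_k (h_k)_j \qquad (j = 2, \ldots, n).
\]
Writing $\ell = (L_{12}, \ldots, L_{1n})^T$ and $\mu = (\lambda_2, \ldots, \lambda_n)^T$, this is precisely the linear system $n \ell = \widehat{H}\, \mu$, where the coefficient matrix is the submatrix $\widehat{H}$ of $H$ obtained by deleting its first row and first column.

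The main step of the proof is to show that $\widehat{H}$ is invertible; once this is established, $\mu = \frac{1}{n} \widehat{H}^{-1} \ell$ is uniquely determined by $\ell$. I would argue invertibility directly from the orthogonality of the columns of $H$. Suppose $\sum_{k=2}^n c_k \widehat{h}_k = 0$, where $\widehat{h}_k$ denotes the column $h_k$ with its first entry removed. Then the vector $v := \sum_{k=2}^n c_k h_k$ has all coordinates equal to $0$ except possibly the first. On the other hand, $v$ is a linear combination of $h_2, \ldots, h_n$, each of which is orthogonal to $h_1 = \mathbf{1}$; hence the sum of the entries of $v$ is $0$, forcing the first entry to vanish as well. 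Thus $v = 0$, and since $h_2, \ldots, h_n$ are linearly independent (being columns of the invertible matrix $H$), we get $c_2 = \cdots = c_n = 0$. Therefore $\widehat{H}$ has linearly independent columns and is invertible.

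I expect the slightly delicate part to be precisely this invertibility argument for $\widehat{H}$, since a naive determinant computation (via a Schur-complement expansion of $\det H$) only shows $\det(\widehat{H} - J) \neq 0$, which is not what we need. The orthogonality-based argument above avoids this pitfall and uses only the defining property of a normalized Hadamard matrix together with the fact that $\mathbf{1}$ is the eigenvector corresponding to $\lambda_1 = 0$. Combining the two ingredients yields the uniqueness claim.
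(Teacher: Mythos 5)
Your proof is correct, and it sets up exactly the same linear system $n\ell = \widehat{H}\mu$ as the paper; the only place you diverge is in how you justify that $\widehat{H}$ is invertible. The paper does this by exhibiting the inverse explicitly, showing $\bigl(\tfrac{1}{n}\widehat{H}\bigr)^{-1} = \widehat{H}^T - J$ via the row inner products of $\widehat{H}$ (a row with itself gives $n-1$, two distinct rows give $-1$, a row with the all-ones vector gives $-1$). Your argument instead deduces invertibility abstractly: a vanishing combination of the truncated columns $\widehat{h}_k$ lifts to a combination of $h_2,\dots,h_n$ supported on the first coordinate, which must also vanish because each $h_k$ ($k\ge 2$) is orthogonal to $\mathbf{1}$. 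Both are valid, and your observation that the naive Schur-complement expansion only yields $\det(\widehat{H}-J)\neq 0$ is a fair warning about a genuine pitfall. The trade-off is that the paper's explicit formula is not just a verification device: it is reused immediately afterwards to give the alternate proof that the Laplacian eigenvalues are even integers (since the entries of $\widehat{H}^T - J$ lie in $\{0,-2\}$ and the $L_{1j}$ lie in $\{0,-1\}$), and it underlies the computational procedure that rewrites every off-diagonal entry of $L$ as a linear combination of $L_{12},\dots,L_{1n}$. So if you only need the uniqueness statement your argument suffices and is arguably cleaner, but for the purposes of the rest of the paper you would still want to compute the inverse explicitly.
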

\begin{proof}
Suppose that $G$ is a Hadamard diagonalizable graph, and let $H$ be a normalized Hadamard matrix such that
\[
L=\frac1nH\Lambda H^T=\frac1n\sum_{k=1}^n \lambda_k h_kh_k^T.\]
It follows that
\[L_{1j}  =  \dfrac1n \sum_{k=2}^n \lambda_k (h_{k})_j,
\]
where the notation $(h_k)_j$ refers to the $j^{th}$ entry of the vector $h_k$. Writing the above in matrix form we have
\[
\frac1n\left[\begin{array}{cccc}(h_2)_2&(h_3)_2&\cdots&(h_n)_2\\
(h_2)_3&(h_3)_3&\cdots&(h_n)_3\\
\vdots&\vdots&\ddots&\vdots\\
(h_2)_n&(h_3)_n&\cdots&(h_n)_n\end{array}\right]\left[\begin{array}{c}\lambda_2\\\lambda_3\\\vdots\\\lambda_n\end{array}\right]=
\frac1n\widehat{H}\left[\begin{array}{c}\lambda_2\\\lambda_3\\\vdots\\\lambda_n\end{array}\right] =\left[\begin{array}{c}L_{12}\\L_{13}\\\vdots\\L_{1n}\end{array}\right].
\]
The result now follows if we can prove that $\frac1n\widehat H$ is invertible, showing that we can solve for the $\lambda_i$ in terms of the off-diagonal entries in the first row.  In particular, we show that $\big(\frac1n\widehat H\big)^{-1}={\widehat H}^T-J$.

To prove this, we look at the rows of $\widehat{H}$. Note that if we append $1$s to the front, we have rows of $H$, and any two distinct rows in $H$ are perpendicular.  From this we can conclude that the dot product of two distinct rows in $\widehat{H}$ must be $-1$ (i.e.\ to compensate for the $1$s appended to the front); the dot product of a row in $\widehat{H}$ with the all $1$s vector must similarly be $-1$; finally, the dot product of a row with itself will be $n-1$.  

Multiplying $(\frac1n\widehat{H})(\widehat{H}^T-J)$ is equivalent to looking at dot products of rows in $\frac1n\widehat{H}$ and rows in $\widehat{H}-J$.  If the rows are the same, the result will be $\frac1n((n-1)-(-1))=1$; and if the rows are distinct the result will be $\frac1n((-1)-(-1))=0$.  In particular, the result is the identity matrix, establishing the inverse.
\end{proof}

The preceding can be used to give a new proof that all Laplacian eigenvalues of a Hadamard diagonalizable graph are even integers (see Proposition \ref{prop:regular_even}, originally proven in \cite{HD1}).

\begin{proof}[Proof that the eigenvalues are even integers]
We have
\[\left[\begin{array}{c}\lambda_2\\\lambda_3\\\vdots\\\lambda_n\end{array}\right] =(\widehat{H}^T-J)\left[\begin{array}{c}L_{12}\\L_{13}\\\vdots\\L_{1n}\end{array}\right].
\]
Since the entries in $\widehat{H}^T-J$ are in $\{0,-2\}$ while the entries $L_{12},\ldots,L_{1n}$ are in $\{0,-1\}$, the result of the multiplication will be a vector of integers which are even.
\end{proof}

Suppose we are given an $n\times n$ Hadamard matrix $H$ and wish to find all graphs which are Hadamard diagonalizable by $H$. Using Proposition~\ref{prop:unique}, we can narrow our search space down to size $2^{n-1}$ by looking at all possible $\{0, -1\}$ assignments to $L_{12},\ldots,L_{1n}$, and rewriting all of the off-diagonal entries of $L$ as linear combinations of $L_{12},\ldots,L_{1n}$.  This rewrite can be done because each entry is some linear combination of the $\lambda_2,\ldots,\lambda_n$, while the proof of Proposition~\ref{prop:unique} shows that each of the $\lambda_i$ is a linear combination  of $L_{12},\ldots,L_{1n}$. Then every assignment will produce a matrix via these linear combinations, though not every assignment will correspond to a graph, as there might be other entries $L_{ij}\notin\{-1,0\}$. The entry $L_{ij}=0$ if there is no edge between vertex $i$ and vertex $j$, and is equal to $-1$ if there is an edge. If $L_{ij}$ is some value other than $0$ or $-1$, we have not produced a Laplacian matrix. For any assignment of the values $\{0, -1\}$ to the `variables' $L_{12},\ldots,L_{1n}$, the goal will be to determine if the linear combinations that arise elsewhere in the matrix (off the diagonal) are all equal to either 0 or $-1$; if so, that assignment produces a graph. It is also possible to construct the same graph multiple ways (i.e.\ the same up to relabeling of the vertices).
Note that many distinct entries of the Laplacian may be expressed using the same linear combination of the variables $L_{12},\ldots,L_{1n}$.

To illustrate this, we carry this procedure out for the Hadamard matrix \texttt{had.16.1} from Sloane \cite{Sloane}, see Table~\ref{had.16.1}, to produce an auxiliary matrix determining the linear combinations.  For the $120$ entries above the diagonal of a possible Laplacian matrix (by symmetry the entries below the diagonal will be equal) there were $27$ distinct linear combinations produced. The auxiliary matrix is given in Table~\ref{tab:16example} where the $(i,j)^{th}$ entry corresponds to the coefficient of $L_{1j}$ in the $i^{th}$ linear combination.  For notational convenience, we have labeled the sixteen rows and columns of $L$ using hexadecimal symbols $\{0, 1, \cdots, 9, A, B, \cdots, F\}$ to more easily indicate which entries of $L$ correspond to the $i^{th}$ linear combination in the accompanying table.

\begin{table}\label{had.16.1}
    \caption{Hadamard matrix \texttt{had.16.1}}
    \label{tab:my_label}
    
\[
\small
\left[\begin{array}{rrrrrrrrrrrrrrrr}
  1 &  1 &  1 &  1 &  1 &  1 &  1 &  1 &  1 &  1 &  1 &  1 &  1 &  1 &  1 &  1 \\
  1 & -1 &  1 & -1 &  1 & -1 &  1 & -1 &  1 & -1 &  1 & -1 &  1 & -1 &  1 & -1 \\
  1 &  1 & -1 & -1 &  1 &  1 & -1 & -1 &  1 &  1 & -1 & -1 &  1 &  1 & -1 & -1 \\
  1 & -1 & -1 &  1 &  1 & -1 & -1 &  1 &  1 & -1 & -1 &  1 &  1 & -1 & -1 &  1 \\
  1 &  1 &  1 &  1 & -1 & -1 & -1 & -1 &  1 &  1 &  1 &  1 & -1 & -1 & -1 & -1 \\
  1 & -1 &  1 & -1 & -1 &  1 & -1 &  1 &  1 & -1 &  1 & -1 & -1 &  1 & -1 &  1 \\
  1 &  1 & -1 & -1 & -1 & -1 &  1 &  1 &  1 &  1 & -1 & -1 & -1 & -1 &  1 &  1 \\
  1 & -1 & -1 &  1 & -1 &  1 &  1 & -1 &  1 & -1 & -1 &  1 & -1 &  1 &  1 & -1 \\
  1 &  1 &  1 &  1 &  1 &  1 &  1 &  1 & -1 & -1 & -1 & -1 & -1 & -1 & -1 & -1 \\
  1 & -1 &  1 & -1 &  1 & -1 & -1 &  1 & -1 &  1 & -1 &  1 & -1 &  1 &  1 & -1 \\
  1 &  1 & -1 & -1 &  1 &  1 & -1 & -1 & -1 & -1 &  1 &  1 & -1 & -1 &  1 &  1 \\
  1 & -1 & -1 &  1 &  1 & -1 &  1 & -1 & -1 &  1 &  1 & -1 & -1 &  1 & -1 &  1 \\
  1 &  1 &  1 &  1 & -1 & -1 & -1 & -1 & -1 & -1 & -1 & -1 &  1 &  1 &  1 &  1 \\
  1 & -1 &  1 & -1 & -1 &  1 &  1 & -1 & -1 &  1 & -1 &  1 &  1 & -1 & -1 &  1 \\
  1 &  1 & -1 & -1 & -1 & -1 &  1 &  1 & -1 & -1 &  1 &  1 &  1 &  1 & -1 & -1 \\
  1 & -1 & -1 &  1 & -1 &  1 & -1 &  1 & -1 &  1 &  1 & -1 &  1 & -1 &  1 & -1 \\
\end{array}
\right]
\]
\end{table}

\begin{table}
\centering
\caption{The auxiliary matrix for the Hadamard matrix \texttt{had.16.1} where each row corresponds with a distinct linear combination appearing in $L$ in terms of the off-diagonal entries in the first row.  In the accompanying table, we indicate for each row which entries $L_{ij}$ correspond to this linear combination, with $i,j$ in hexadecimal.}
\label{tab:16example}
\[
\left[\begin{array}{rrrrrrrrrrrrrrr}
1 & 0 & 0 & 0 & 0 & 0 & 0 & \phantom{-}0 & \phantom{-}0 & 0 & 0 & 0 & 0 & 0 & 0 \\
0 & 1 & 0 & 0 & 0 & 0 & 0 & 0 & 0 & 0 & 0 & 0 & 0 & 0 & 0 \\
0 & 0 & 1 & 0 & 0 & 0 & 0 & 0 & 0 & 0 & 0 & 0 & 0 & 0 & 0 \\
0 & 0 & 0 & 1 & 0 & 0 & 0 & 0 & 0 & 0 & 0 & 0 & 0 & 0 & 0 \\
0 & 0 & 0 & 0 & 1 & 0 & 0 & 0 & 0 & 0 & 0 & 0 & 0 & 0 & 0 \\
0 & 0 & 0 & 0 & 0 & 1 & 0 & 0 & 0 & 0 & 0 & 0 & 0 & 0 & 0 \\
0 & 0 & 0 & 0 & 0 & 0 & 1 & 0 & 0 & 0 & 0 & 0 & 0 & 0 & 0 \\
0 & 0 & 0 & 0 & 0 & 0 & 0 & 1 & 0 & 0 & 0 & 0 & 0 & 0 & 0 \\
0 & 0 & 0 & 0 & 0 & 0 & 0 & 0 & 1 & 0 & 0 & 0 & 0 & 0 & 0 \\
0 & 0 & 0 & 0 & 0 & 0 & 0 & 0 & 0 & 1 & 0 & 0 & 0 & 0 & 0 \\
0 & 0 & 0 & 0 & 0 & 0 & 0 & 0 & 0 & 0 & 1 & 0 & 0 & 0 & 0 \\
0 & 0 & 0 & 0 & 0 & 0 & 0 & 0 & 0 & 0 & 0 & 1 & 0 & 0 & 0 \\
0 & 0 & 0 & 0 & 0 & 0 & 0 & 0 & 0 & 0 & 0 & 0 & 1 & 0 & 0 \\
0 & 0 & 0 & 0 & 0 & 0 & 0 & 0 & 0 & 0 & 0 & 0 & 0 & 1 & 0 \\
0 & 0 & 0 & 0 & 0 & 0 & 0 & 0 & 0 & 0 & 0 & 0 & 0 & 0 & 1 \\
0 & 0 & 0 & 0 & 0 & \frac{1}{2} & \frac{1}{2} & 0 & 0 & 0 & 0 & 0 & 0 & \frac{1}{2} & -\frac{1}{2} \\
0 & 0 & 0 & 0 & 0 & \frac{1}{2} & \frac{1}{2} & 0 & 0 & 0 & 0 & 0 & 0 & -\frac{1}{2} & \frac{1}{2} \\
0 & 0 & 0 & \frac{1}{2} & \frac{1}{2} & 0 & 0 & 0 & 0 & 0 & 0 & \frac{1}{2} & -\frac{1}{2} & 0 & 0 \\
0 & 0 & 0 & \frac{1}{2} & \frac{1}{2} & 0 & 0 & 0 & 0 & 0 & 0 & -\frac{1}{2} & \frac{1}{2} & 0 & 0 \\
0 & 0 & 0 & 0 & 0 & \frac{1}{2} & -\frac{1}{2} & 0 & 0 & 0 & 0 & 0 & 0 & \frac{1}{2} & \frac{1}{2} \\
0 & 0 & 0 & 0 & 0 & -\frac{1}{2} & \frac{1}{2} & 0 & 0 & 0 & 0 & 0 & 0 & \frac{1}{2} & \frac{1}{2} \\
0 & 0 & 0 & \frac{1}{2} & -\frac{1}{2} & 0 & 0 & 0 & 0 & 0 & 0 & \frac{1}{2} & \frac{1}{2} & 0 & 0 \\
0 & 0 & 0 & -\frac{1}{2} & \frac{1}{2} & 0 & 0 & 0 & 0 & 0 & 0 & \frac{1}{2} & \frac{1}{2} & 0 & 0 \\
0 & \frac{1}{2} & \frac{1}{2} & 0 & 0 & 0 & 0 & 0 & 0 & \frac{1}{2} & -\frac{1}{2} & 0 & 0 & 0 & 0 \\
0 & \frac{1}{2} & \frac{1}{2} & 0 & 0 & 0 & 0 & 0 & 0 & -\frac{1}{2} & \frac{1}{2} & 0 & 0 & 0 & 0 \\
0 & \frac{1}{2} & -\frac{1}{2} & 0 & 0 & 0 & 0 & 0 & 0 & \frac{1}{2} & \frac{1}{2} & 0 & 0 & 0 & 0 \\
0 & -\frac{1}{2} & \frac{1}{2} & 0 & 0 & 0 & 0 & 0 & 0 & \frac{1}{2} & \frac{1}{2} & 0 & 0 & 0 & 0
\end{array}\right]
\]
\begin{tabular}{|c|c|}\hline
Row&Entries\\ \hline
$1$&$01,23,45,67,89,AB,CD,EF$\\ \hline
$2$&$02,13,8A,9B$\\ \hline
$3$&$03,12,8B,9A$\\ \hline
$4$&$04,15,8C,9D$\\ \hline
$5$&$05,14,8D,9C$\\ \hline
$6$&$06,17,8E,9F$\\ \hline
$7$&$07,16,8F,9E$\\ \hline
$8$&$08,19,2A,3B,4C,5D,6E,7F$\\  \hline
$9$&$09,18,2B,3A,4D,5C,6F,7E$\\  \hline
\end{tabular}\hfil
\begin{tabular}{|c|c|} \hline
Row&Entries\\ \hline
$10$& $0A,1B,28,39$\\ \hline
$11$& $0B,1A,29,38$\\ \hline
$12$& $0C,1D,48,59$\\ \hline
$13$& $0D,1C,49,58$\\ \hline
$14$& $0E,1F,68,79$\\ \hline
$15$& $0F,1E,69,78$\\ \hline
$16$& $24,35,AC,BD$\\ \hline
$17$& $25,34,AD,BC$\\ \hline
$18$& $26,37,AE,BF$\\ \hline
\end{tabular}\hfil
\begin{tabular}{|c|c|} \hline
Row&Entries\\ \hline
$19$& $27,36,AF,BE$\\ \hline
$20$& $2C,3D,4A,5B$\\ \hline
$21$& $2D,3C,4B,5A$\\ \hline
$22$& $2E,3F,6A,7B$\\ \hline
$23$& $2F,3E,6B,7A$\\ \hline
$24$& $46,57,CE,DF$\\ \hline
$25$& $47,56,CF,DE$\\ \hline
$26$& $4E,5F,6C,7D$\\ \hline
$27$& $4F,5E,6D,7C$\\ \hline
\end{tabular}
\end{table}

Looking at the auxiliary matrix in Table~\ref{tab:16example}, the identity matrix induced on the first $15$ rows is a reflection that the linear combination for an entry from the first row is trivial.

Let us view an assignment of $L_{12},\ldots,L_{1n}$ to values of $0$ or $-1$ as selecting some subset of the columns (so if $L_{1j}$ is $-1$ take the $j^{th}$ column; if it is $0$ do not take the column).  Then this will produce a Laplacian matrix for a graph \emph{if and only if} the sum of the columns produce a vector with entries in $\{0, 1\}$, since this is only the case when the off-diagonal entries will be $0$ and $-1$.

To run through all possible assignments of $L_{12},\ldots,L_{1n}$, we need to consider all subsets of columns of the auxiliary matrix. However, to reduce the search space, we proceed via a tree-like exploration of the space, where at each step we decide to either add or not add a particular column. After we add a new column to our subset, we then do the following check: if for each entry there is a possibility that the sum of some combination of the remaining columns can result in the value being $0$ or $1$, proceed; if not then we `prune the tree' and don't explore any further on that branch. 

For example, if we take the columns $2$, $3$, and $11$ in the matrix in Table~\ref{tab:16example}, then the last entry in the sum of these column vectors will be $1/2$. The last entries in the remaining columns which we could add to our subset are all equal to $0$, so no matter which combination of columns $12,13,14,15$ we take we can never change that value from $1/2$, and so there is no need to explore that part of the space.  To get the most out of this, it is useful to first pre-sort the columns so that such conflicts will arise early.

If we get down to a leaf in the tree and the resulting combination of columns is a $0$-$1$ vector, then we have found a Hadamard diagonalizable graph.  To produce the graph we find where the $1$s are located and the corresponding Laplacian entries to which they correspond.  These corresponding Laplacian entries represent the edges in the graph.  As an example if we take the sum of the first three columns in Table~\ref{tab:16example} then this will produce a $1$ in rows $1,2,3,24,$ and $25$ of the resulting vector.  So this will be the graph on the vertex set with vertices $\{0,1,\ldots,F\}$ and with edges
\[
\underbrace{01,23,45,67,89,AB,CD,EF}_{\text{row $1$}},
\underbrace{02,13,8A,9B}_{\text{row $2$}},
\underbrace{03,12,8B,9A}_{\text{row $3$}},
\underbrace{46,57,CE,DF}_{\text{row $24$}},
\underbrace{47,56,CF,DE}_{\text{row $25$}}
\]
which becomes the graph $4K_4$ (cliques on the vertices $0,1,2,3$; and $4,5,6,7$; and $8,9,A,B$; and $C,D,E,F$).  As graphs are found they are tested to see whether they have been seen before, and we only keep those graphs which have not been seen before; this can be done, for example, by using canonical labeling methods.

The procedure outlined here was implemented in both \texttt{SageMath} and \texttt{C++} with all computations done using integer variables.  The only external call needed is to determine which graphs are discovered up to isomorphism. The program can be downloaded at \oururl.

\subsection{Equivalency of Hadamard matrices}

We know that if $G$ is Hadamard diagonalizable, then it is Hadamard diagonalizable by some normalized Hadamard matrix. However, given a normalized Hadamard matrix $H$, there are many other normalized Hadamard matrices which are equivalent to $H$ (i.e.\ obtained from $H$ via some sequence of operations from negating rows, negating columns, permuting rows or permuting columns). It is not the case that if $G$ is diagonalized by $H$ that it is also diagonalizable by any $H'$ equivalent to $H$. For example, consider the standard normalized Hadamard matrix of order 4:
\[H = \left[\begin{array}{rrrr} 1 & 1 & 1 & 1 \\ 1 & -1 & 1 & -1 \\ 1 & 1 & -1 & -1 \\ 1 & -1 & -1 & 1 \end{array}\right].\]
This Hadamard diagonalizes the complete graph $K_4$. However, an equivalent Hadamard matrix is obtained by negating the second row:
\[H' = \left[\begin{array}{rrrr} 1 & 1 & 1 & 1\\ -1 & 1 & -1 & 1 \\ 1 & 1 & -1 & -1 \\ 1 & -1 & -1 & 1 \end{array}\right].\]
Note that every Laplacian matrix has 0 as an eigenvalue, with an eigenvector proportional to the all-ones vector. Since the columns of a Hadamard matrix that diagonalize a graph represent the eigenvectors of that graph's Laplacian, and this matrix $H'$ has no constant column, it is clear that $H'$ does not diagonalize any connected graph.

We suggest, however, that it may be possible to show the following:
\begin{conj}\label{permcols}
If $H_1$ and $H_2$ are equivalent normalized Hadamard matrices, then $G$ is Hadamard diagonalizable by $H_1$ if and only if $G$ (up to some relabeling) is Hadamard diagonalizable by $H_2$.
\end{conj}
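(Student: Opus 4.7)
The plan is to reduce Conjecture~\ref{permcols} to an essentially structural statement about the equivalence of normalized Hadamard matrices, and then use the normalization conditions to pin down the decomposition. By symmetry of the equivalence relation, it suffices to prove one direction of the ``if and only if''. Suppose $H_2=EH_1F$ for signed permutation matrices $E$ and $F$, and write $E=D_E P_E$, $F=P_F D_F$ where $P_E,P_F$ are permutation matrices and $D_E,D_F$ are diagonal $\pm1$ matrices. If $L(G)=\tfrac{1}{n}H_1\Lambda H_1^T$ for some diagonal $\Lambda$, the natural candidate is $\Lambda'=F^T\Lambda F$, which is still diagonal (since conjugating a diagonal matrix by a signed permutation gives a permuted diagonal matrix); one checks $F\Lambda'F^T=\Lambda$, so
\[
\tfrac{1}{n}H_2\Lambda'H_2^T \;=\; E\,L(G)\,E^T \;=\; D_E\bigl(P_E L(G) P_E^T\bigr)D_E \;=\; D_E\,L(G'')\,D_E,
\]
where $G''$ denotes the relabeling of $G$ induced by $P_E$. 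For the right-hand side to be the Laplacian of a graph $G'$ (necessarily isomorphic to $G$), one needs $(D_E)_{ii}(D_E)_{jj}=1$ for every edge $ij$ of $G''$, i.e., $D_E$ must be constant in sign on each connected component of $G''$.

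The next step is to show that one can always choose the decomposition so that $D_E=\pm I$ and $D_F=\pm I$, equivalently that there exist permutation matrices $P_1,P_2$ fixing the first index with $H_2=P_1 H_1 P_2$. Once this is done, the computation above (with $D_E=I$) immediately yields $G'=G''$ as the desired relabeling, and the analogous direction follows from swapping the roles of $H_1$ and $H_2$. To attempt this reduction, one examines the normalization of $H_2$: requiring the first column of $H_2=D_E P_E H_1 P_F D_F$ to equal $\mathbf{1}$ forces
\[
(D_E)_{ii} \;=\; (D_F)_{11}\,(H_1)_{\pi_E(i),\,\pi_F(1)},
\]
with an analogous formula for $D_F$ derived from the first-row condition. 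If $\pi_F(1)=1$, then $(H_1)_{\pi_E(i),1}=1$ and hence $D_E=\pm I$ automatically; a symmetric argument then yields $D_F=\pm I$. The flexibility to arrange $\pi_F(1)=1$ comes from the nonuniqueness of the decomposition: for any element $(E_0,F_0)$ of the automorphism group $\operatorname{Aut}(H_1)=\{(A,B):AH_1B=H_1\}$, the pair $(EE_0,\,F_0^{-1}F)$ yields the same $H_2$.

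The main obstacle is showing that $\operatorname{Aut}(H_1)$ is always rich enough to arrange $\pi_F(1)=1$. This amounts to a delicate combinatorial question about whether every equivalence between two normalized Hadamard matrices can be realized without ``essential'' sign flips, and I expect the bulk of the technical work to lie here. A potential alternative, should the reduction to pure permutations fail in general, is to argue directly from the identity $\tfrac{1}{n}H_2\Lambda'H_2^T=D_E L(G'')D_E$: one would show that the sign pattern of $D_E$ dictated by the normalization of $H_2$ is automatically constant on the connected components of every graph $G''$ that is diagonalized by $H_1$, by exploiting the fact that the columns of $H_1$ are the $\pm1$-eigenvectors of $L(G)$. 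Either route requires nontrivial structural input linking the combinatorics of $\operatorname{Aut}(H_1)$ to the stabilizer of normalization, and this is the core difficulty of the proposed argument.
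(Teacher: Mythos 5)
The statement you are trying to prove is not proved in the paper at all: it is stated as Conjecture~\ref{permcols} and explicitly left open (``We suggest, however, that it may be possible to show the following\dots If true, this conjecture would significantly shorten the computational time''). The paper only records the easy part, namely that three of the four equivalence operations --- permuting rows, permuting columns, and negating columns --- manifestly preserve the set of diagonalized graphs up to relabeling; the entire difficulty is row negation. Your algebraic reduction is consistent with this and is correct as far as it goes: writing $H_2=EH_1F$, taking $\Lambda'=F^T\Lambda F$, and observing that $\tfrac1nH_2\Lambda'H_2^T=D_EL(G'')D_E$ correctly isolates the obstruction to the sign pattern of $D_E$ on the components of $G''$, and the normalization computation showing $D_E=\pm I$ whenever the first column of $H_1$ is the one carried to the first column of $H_2$ is also right. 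But your proposal then stops exactly at the crux --- you say yourself that arranging $\pi_F(1)=1$ via $\operatorname{Aut}(H_1)$, or alternatively showing $D_E$ is constant on components, ``requires nontrivial structural input'' that you do not supply. So this is a correct reduction plus an honest statement of the remaining open problem, not a proof; and since the paper proves nothing here either, there is no argument to compare it against.

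One further caution about your second route. The conjecture asserts only that \emph{some} diagonal matrix diagonalizes $L(G')$ against $H_2$, whereas your computation tracks the one specific candidate $\Lambda'=F^T\Lambda F$. A graph can be diagonalized by $H_2$ with a different assignment of eigenvalues to columns than the one transported from $H_1$ (indeed, by Proposition~\ref{prop:unique} the assignment is pinned down by the first row of the Laplacian, which changes under relabeling), so showing that your particular $\Lambda'$ fails to produce a Laplacian would not refute the conjecture, and conversely a proof via route two would have to either quantify over all admissible $\Lambda$ or argue that the forced sign pattern of $D_E$ is genuinely constant on components for every $G''$ diagonalized by $H_1$. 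That link between the normalization of $H_2$ and the component structure of the diagonalized graphs is precisely the content of the conjecture, and it remains unestablished in your write-up.
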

%\end{thm}
%\begin{proof}
%THE BIG MISSING HOLE IN THE PAPER THAT NEEDS TO BE PATCHED.
%\end{proof}
If true, this conjecture would significantly shorten the computational time.

We note, in addition, that of the four operations by which an equivalent Hadamard matrix is produced, three of them preserve the graphs diagonalized by that Hadamard matrix. Permuting columns of $H$ corresponds to a permutation of the eigenvectors of $L$; permuting rows corresponds simply to a relabelling of the vertices of the graph. Negating columns does not change the eigenspaces, it simply scales our representative eigenvectors of $L$. 

% We note that to prove Conjecture \ref{permcols}, we could simply prove that the action of negating exactly half of the rows of a Hadamard matrix can be 'reversed' by a combination of the other three operations. Alternatively, it would suffice to show that if $h_1, \ldots, h_n$ denote the columns of a normalized Hadamard matrix, and we form the diagonal matrix $S=\diag(h_\alpha)$ for some fixed $\alpha$, then the map $\phi(h_k) = Sh_k$ acts as a permutation on the set $\{h_1, \ldots, h_n\}.$

\section{Hadamard diagonalizable graphs of small order}\label{sec:small_graphs}

In this section, we present complete lists of all Hadamard diagonalizable graphs for all orders $n \in \{1, 2, 4, 8, 12, 16, 20, 24, 28, 32, 36\},$ obtained using the theoretical tools of Section~\ref{sec:theory} and the computational tools of Section~\ref{sec:computational_tool}. Hadamard diagonalizable graphs are characterized in the literature up to order 12 in Barik, Fallat, and Kirkland~\cite{HD1}, but we present these as well for completeness. The most significant contributions here on the computational side are for orders $n=16, 24, 32$.

% For orders $4,12,20,28,36,\ldots$ it follows from Section~\ref{sec:theory} that as long as Hadamard matrices exist there are exactly four Hadamard diagonalizable graphs, namely $K_n,K_{n/2,n/2},2K_{n/2},nK_1$.

% For order $8$ there is a unique normalized Hadamard matrix and it is the Sylvester construction.  So the Hadamard diagonalizable graphs have been determined \cite{HD2} and consist of all Cayley graphs for $\mathbb{Z}_2^3$.

We will use the following notations:
\begin{itemize}
    \item $G^c$ is the graph complement of $G$.  
    \item $G+H$ is the disjoint union of the graphs $G$ and $H$. In particular, we denote by $kG$ the graph consisting of $k$ disjoint copies of $G$.
    \item $G\square H$ is the Cartesian product of $G$ and $H$. 
          That is $V(G\square H) = V(G) \times V(H)$ and $(u,x),(v,y) \in V(G\square H)$ are adjacent if and only if  $u=v$ and $xy \in E(H)$ or  $x=y$ and $uv \in E(G)$.
    \item $G\vee H$ is the join of $G$ and $H$. The join is obtained from $G+H$ by adding all edges $uv$, where $u \in V(G)$ and $v \in V(H)$.
    \item $G\wr H$ is the \emph{lexicographic product} of $G$ with $H$; that is, the graph formed by replacing each vertex of $G$ with a copy of $H$, and adding all possible edges between the vertices in the copies of $H$ corresponding to adjacent vertices in $G$ (a form of a blow-up). This product is sometimes referred to as \emph{graph composition}. It has also occasionally appeared in the past as a \emph{wreath product}, due to its connection with wreath products in group theory (see \cite{We}).
    \item $H_{n,n}$ is the graph $K_{n,n}$ minus a perfect matching (note that $H_{4,4}=Q_3$ is the cube graph on eight vertices); \item $CP_{2n}$ is the cocktail party graph on $2n$ vertices formed by taking the complete graph and removing a perfect matching, so $CP_{2n}=K_{2,2,\ldots,2}$.
    \item Let $\mathcal{G}$ be a finite group, and let $S$ be some subset of the elements of $\mathcal{G}$. Then $\mathcal{G}(S)$ is a \emph{Cayley graph} with vertices representing elements of the group $\mathcal{G}$, and an edge between $u$ and $v$ whenever $u-v \in S$.
\end{itemize}

There are many ways to write some of the graphs in what follows, and in the interest of future theoretical research in this area (that is, a pursuit of theoretical characterizations of Hadamard diagonalizable graphs), we will often give several isomorphic representations of the same graph. In particular, many can be written as a Cartesian or lexicographic product of two graphs. This is pursued in earnest for the graphs of order $24$, as we conjecture (based on these and the preliminary data available for orders $40$ and $56$) that in a manner similar to orders $8k+4, k\ge 0$, there are at most 26 distinct graphs which are Hadamard diagonalizable of order $16k+8$ (see more discussions Conjecture~\ref{conj26}). 

It is shown in \cite{HD1} that $G+G\cong 2K_1\square G$ and $G\vee G\cong K_2\wr G$ are Hadamard diagonalizable graphs if $G$ is Hadamard diagonalizable, and that the Cartesian product of two Hadamard diagonalizable graphs is also Hadamard diagonalizable. We now show that the lexicographic product of two Hadamard diagonalizable graphs is also Hadamard diagonalizable, giving further methods to construct Hadamard diagonalizable graphs.

%\begin{prop}\label{th:wr}
\begin{lem}\label{th:wr}
Let $G_1$ and $G_2$ be Hadamard diagonalizable. Then $G_1\wr G_2$ is Hadamard diagonalizable.
\end{lem}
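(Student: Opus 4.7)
The plan is to exhibit an explicit Hadamard matrix diagonalizing $L(G_1 \wr G_2)$, built as the Kronecker product of Hadamard matrices diagonalizing $L(G_1)$ and $L(G_2)$. Recall that $H_1 \otimes H_2$ is a Hadamard matrix of order $n_1 n_2$ whenever $H_1$ and $H_2$ are Hadamard of orders $n_1$ and $n_2$, so we have a ready-made candidate. Moreover, we may assume $H_1$ and $H_2$ are normalized, since the all-ones vector is a $0$-eigenvector of each Laplacian and must appear (up to sign) as a column of any diagonalizing Hadamard matrix.

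First, I would derive a clean expression for $L(G_1\wr G_2)$. By Proposition~\ref{prop:regular_even}, $G_1$ is $r_1$-regular and $G_2$ is $r_2$-regular, so every vertex of $G_1\wr G_2$ has degree $r_1 n_2+r_2$. A direct count of the adjacencies in the lexicographic product gives
\[
L(G_1\wr G_2) \;=\; n_2\, L_1\otimes I_{n_2} \;+\; A_1\otimes\bigl(n_2 I_{n_2}-J_{n_2}\bigr) \;+\; I_{n_1}\otimes L_2,
\]
where $A_1$ is the adjacency matrix of $G_1$ and $J_{n_2}$ is the $n_2\times n_2$ all-ones matrix. The third summand records edges inside each copy of $G_2$, the second encodes complete bipartite joins between copies of $G_2$ indexed by adjacent vertices of $G_1$, and the first supplies the required diagonal degree contribution.

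Next, let $v_i$ range over the columns of $H_1$, with $L_1 v_i=\lambda_i v_i$, so $A_1 v_i=(r_1-\lambda_i)v_i$; similarly let $w_j$ range over the columns of $H_2$, with $L_2 w_j=\mu_j w_j$. I would verify that each Kronecker product $v_i\otimes w_j$ is an eigenvector of $L(G_1\wr G_2)$. The key observation is that, since $H_2$ is normalized, $w_1$ is the all-ones vector and every other $w_j$ is orthogonal to it; hence $(n_2 I_{n_2}-J_{n_2})w_1=0$ and $(n_2 I_{n_2}-J_{n_2})w_j=n_2 w_j$ for $j\ne 1$. Substituting into the formula above yields
\[
L(G_1\wr G_2)(v_i\otimes w_j) \;=\; \begin{cases} n_2\lambda_i\,(v_i\otimes w_j) & \text{if } j=1,\\ (n_2 r_1+\mu_j)\,(v_i\otimes w_j) & \text{if } j\ne 1. \end{cases}
\]
Thus $H_1\otimes H_2$ is the required diagonalizing Hadamard matrix.

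No step looks seriously obstructive: the decomposition of $L(G_1\wr G_2)$ is a routine bookkeeping exercise, and the eigenvalue computation is a one-line Kronecker calculation once the normalization of $H_2$ is used. The only subtlety worth flagging is the requirement that $H_2$ be normalized so that $J_{n_2}$ interacts cleanly with its columns; this is harmless because any Hadamard matrix diagonalizing a Laplacian may be normalized by column sign flips without changing the diagonalization.
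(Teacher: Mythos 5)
Your proposal is correct and follows essentially the same route as the paper: both take $H_1\otimes H_2$ as the diagonalizing Hadamard matrix and exploit the fact that $J_{n_2}$ annihilates every column of a normalized $H_2$ except the all-ones column. The only cosmetic difference is that the paper conjugates the adjacency matrix $A(G_1\wr G_2)=I_m\otimes A(G_2)+A(G_1)\otimes J_n$ (permissible since regularity makes $A$ and $L$ simultaneously diagonalizable), whereas you decompose the Laplacian directly and check each Kronecker eigenvector; your eigenvalue formulas check out.
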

%\end{prop}
\begin{proof}
Assume that the graph $G_1$ on $m$ vertices is diagonalized by a normalized Hadamard matrix $H_1$, and the graph $G_2$ on $n$ vertices is diagonalized by a normalized Hadamard matrix $H_2$. Since Hadamard diagonalizable graphs are regular, a Hadamard matrix $H$ diagonalizes the Laplacian of a graph $L(G)$ if and only if it also diagonalizes the adjacency matrix, which we denote $A(G)$. 
We show that the adjacency matrix of the lexicographic product of $G_1$ and $G_2$, $A(G_1\wr G_2)$, is diagonalizable by the Hadamard matrix $H_1\otimes H_2$, where $\otimes$ denotes the Kronecker product (or tensor product) of two matrices.

Assume that 
\[H_1^{-1}A(G_1)H_1 = \Lambda_1, \quad \text{and} \quad H_2^{-1}A(G_2)H_2 = \Lambda_2. \]
The adjacency matrix of the lexicographic product can be written
\[A(G_1\wr G_2)=I_m\otimes A(G_2)+A(G_1)\otimes J_n,\]
where $I_m$ is the identity of order $m$ and $J_n$ is the $n \times n$ matrix of all ones. 

%Note that the Kronecker product is bilinear and satisfies the mixed-product property (that is, $(AB)\otimes(CD) = (A\otimes C)(B\otimes D)$). 
For any normalized Hadamard matrix $H$ of size $n$, $H^{-1}J_nH=ne_1e_1^T=nE_{1,1}$, where $e_1$ is the vector with a 1 in the first position and zeros elsewhere, and $E_{1, 1}$ is a matrix of all-zeros except a 1 in the $(1, 1)$ position. Using this and the fact that the Kronecker product is bilinear and satisfies the properties, $(AB)\otimes(CD) = (A\otimes C)(B\otimes D)$ and $(A\otimes B)^{-1}=A^{-1}\otimes B^{-1}$, we have
\begin{align*}
(H_1\otimes H_2)^{-1}A(G_1\wr G_2)(H_1\otimes H_2)
&=(H_1\otimes H_2)^{-1}(I_m\otimes A(G_2)+A(G_1)\otimes J_n)(H_1\otimes H_2)\\
&=(H_1^{-1}I_mH_1)\otimes (H_2^{-1}A(G_2)H_2)\\ & \qquad \qquad \qquad+(H_1^{-1}A(G_1)H_1)\otimes (H_2^{-1}J_nH_2)\\
&=(I_m\otimes \Lambda_2)+(n\Lambda_1\otimes E_{1,1}),
\end{align*}
which is a diagonal matrix. 
\end{proof}

% With an intent to decompose (if possible) each produced Hadamard diagonalizable graph using the above operations, we note the following:
% \begin{equation*} G\vee G \ \cong\  K_2\wr G, \qquad G+G\ \cong\ 2K_1\wr G\cong 2G;\end{equation*}
%  furthermore,
%  \[K_4\wr \ell K_1\ \cong\ K_2\wr K_{\ell,\ell}\ \cong\  K_{\ell,\ell}\vee K_{\ell,\ell}\ \cong \ (4K_\ell)^c, \qquad K_{4\ell}\ \cong \ K_2\wr K_{2\ell}\ \cong\ K_{2\ell}\vee K_{2\ell}\ \cong \ (4\ell K_1)^c;\]

% \[2mK_1\wr G\ \cong \ 2mG\ \cong \ 2mK_1\square G\]

% \begin{equation*}{(G+G)^c = G^c\vee G^c, \qquad (G\vee G)^c=G^c+G^c;}\end{equation*}

% \begin{equation*}{(G\wr H)^c\ \cong \ G^c\wr H^c} \end{equation*}
% In particular, \[(G\wr K_m)^c\ \cong\ G^c \wr mK_1; \quad 
%  (G\wr mK_1)^c\ \cong\ G^c \wr K_m \quad \mbox{for } m\ge 1\]
% Note also that $CP_{2n}\cong K_n\wr 2K_1\cong K_{2,2,...,2}$.

Thus the set of all Hadamard diagonalizable graphs are closed under the Cartesian and lexicographic product, as the Cartesian product or lexicographic product of two Hadamard diagonalizable graphs is Hadamard diagonalizable. %{\color{red}As a consequence, not only do we know that many Hadamard diagonalizable graphs of large order can be obtained from such graphs of smaller orders by these products, but also we know the diagonalizing Hadamard matrices of larger size for the resulting graphs from the corresponding Hadamard matrices of the factors. }
% I don't think this last sentence is true. I 100% agree we can build up this way, but we have NOT shown that this is the only way to build up; in fact the next sentence seems to disprove it

We also note that for any given Hadamard diagonalizable graphs $G_1$ and $G_2$ with corresponding diagonalizing Hadamard matrices $H_1$ and $H_2$, respectively, it is interesting to see that the Hadamard matrix $H_1\otimes H_2$ diagonalizes both $G_1\square G_2$ and  $G_1\wr G_2$. As two graphs $G_1\square G_2$ and  $G_1\wr G_2$ are nonisomorphic, in general, they may admit different Hadamard matrices as their diagonalizing matrices as well. See for example, $K_2\square K_{6,6}$ and $K_2\wr K_{6,6}$ in Table \ref{tab:had24} below.

\subsection{Order $1$}
The graph $K_1$ is Hadamard diagonalizable.

\subsection{Order $2$}
Both graphs $K_2$ and $2K_1$ are Hadamard diagonalizable by the unique normalized Hadamard matrix $\left[\begin{array}{rr} 1 & 1 \\ 1 & -1\end{array}\right]$.

\subsection{Orders $4$, $12$, $20$, $28$ and $36$}
From the results of Section~\ref{sec:theory}, the only Hadamard diagonalizable graphs are $K_{n}$, $K_{n/2, n/2}$, $2K_{n/2}$, and $n K_1$ for $n \in \{4,12,20,28,36\}$.

\subsection{Order $8$}
There is a unique (up to equivalence) normalized Hadamard matrix of order $8$, and it is the Sylvester construction. The graphs diagonalizable by a Sylvester Hadamard matrix have been characterized in \cite{HD2}, and for order $8$ consist of all Cayley graphs for $\mathbb{Z}_2^3$. We can also represent them as follows:
\begin{table}[h]
\centering
\caption{Hadamard diagonalizable graphs of order $8$}
\label{tab:had8}
\begin{tabular}{|l|l|}\hline
 Graph & Graph complement \\ \hline \hline
 $K_2\wr K_4 \cong K_8$ &  $2K_1\square 4K_1 \cong 2(4K_1)$ \\\hline 
 $K_2\wr K_{2,2}$ &   $K_2\square 4K_1 \cong 2K_1\square 2K_2 \cong 2(2K_2)$ \\\hline
 $K_2\wr 2K_2$  &  $K_2\square 2K_2\ \cong \  2K_1\square K_{2,2} \cong 2K_{2,2}$ \\\hline
 $K_2\wr 4K_1\ \cong \  K_{4,4}$ &  $2K_1\square K_4 \cong 2K_4$ \\\hline 
 $K_2\square K_4$ & $K_{2}\square K_{2,2}  \cong   (K_2)^{\square 3}\cong Q_3$ \\ \hline
\end{tabular}
\end{table}
Note that all of them can be expressed in terms of products of the Hadamard diagonalizable graphs of orders $2$ and $4$.

%\subsection{Order $12$}
%From the results of Section~\ref{sec:theory}, the only Hadamard diagonalizable graphs are $K_{12}$, $K_{6, 6}$, $2K_6$, and $12K_1$.

\subsection{Order $16$}
There are five non-equivalent normalized Hadamard matrices of order $16$.  We will follow Sloane \cite{Sloane} and denote them by \texttt{had.16.$j$} for $j\in\{0,1,2,3,4\}$; note that \texttt{had.16.0} is the Sylvester construction. We use the computational tools of Section~\ref{sec:computational_tool} to produce all graphs diagonalized by one (or more) of these Hadamard matrices.

There are a total of $50$ Hadamard diagonalizable graphs on $16$ vertices, all of which are Cayley graphs. The graphs are given in Table~\ref{tab:had16}. Many of these graphs, not all, can be identified as the products of smaller Hadamard diagonalizable graphs, products of order 2 and order 8, products of order 4 and identical or another graph of order 4, etc. Graphs come in pairs (namely the graph and its complement) and we sometimes only present one of the graphs (to get the other take the complement, for which a simple product notation is not available). We also provide the Cayley expression for some graphs, including the strongly regular graphs, Shrikhande graph, and its cospectral mate, the $(2, 4)$-Hamming graph $K_4\square K_4$.

The column indicating `Family' presents information regarding the graphs which are diagonalized by the same Hadamard matrices---that is, if $G$ and $H$ are in the same family, then any Hadamard matrix diagonalizing the Laplacian of $G$ will also diagonalize the Laplacian of $H$. We now list which graphs are associated with which Hadamard matrices as follows:
\begin{itemize}
\item $46$ graphs for \texttt{had.16.0} are from families A, B, C, D
\item $50$ graphs for \texttt{had.16.1} are from families A, B, C, D, E
\item $48$ graphs for \texttt{had.16.2} are from families A, B, C, E
\item $24$ graphs for \texttt{had.16.3} are from families A, B
\item $10$ graphs for \texttt{had.16.4} are from family A
\end{itemize}

\begin{table}[h]
\centering
\caption{Hadamard diagonalizable graphs of order $16$}
\label{tab:had16}
\begin{tabular}{|c|l|l|}\hline
Family&Graph \hspace{100pt} & Graph complement \\ \hline \hline
A& $K_{16}$ & $16K_1$ \\ \hline
A& $K_{8,8}$ & $2K_8$ \\ \hline
A& $2K_{4,4}$ & $K_2\wr 2K_4$ \\ \hline
A& $4K_4$ & $K_{4,4,4,4}$ \\ \hline
A& $(K_2\square K_4)\wr (2K_1)$ & $H_{4,4}\wr K_2$ \\ \hline \hline
B& $8K_2$ & $K_8\wr 2K_1$ \\ \hline
B& $4K_{2, 2}$ & $K_4\wr(2K_2)$ \\ \hline
B& $K_2\wr 4K_2$ & $2K_1\wr (K_4\wr 2K_1)$\\ \hline
B& $K_2\wr 2K_{2, 2}$ & $2K_1\wr (K_2\wr 2K_2)$\\ \hline
B& $(K_2\square K_4)\wr K_2$ & $H_{4,4}\wr K_2^c$\\ \hline
B& \multicolumn{2}{l|}{$K_2\square (K_4\wr 2K_1)$}\\ \hline
B& \multicolumn{2}{l|}{$K_2\square K_8$}\\ \hline \hline
C& \multicolumn{2}{l|}{$K_{4,4}\square K_2$} \\ \hline
C& \multicolumn{2}{l|}{$2(K_4\square K_2)$}\\ \hline
C& \multicolumn{2}{l|}{$2(K_{2, 2}\wr K_2)$}\\ \hline
C& \multicolumn{2}{l|}{$K_{2, 2}\square K_{2, 2}$}\\ \hline
C& \multicolumn{2}{l|}{$K_{2, 2}\wr K_4$} \\ \hline
C& \multicolumn{2}{l|}{$2H_{4,4}$}\\ \hline
C& \multicolumn{2}{l|}{$\mathbb{Z}_2^4(\{(0, 0, 0, 1), (0, 0, 1, 0), (0, 1, 0, 0), (1, 0, 0, 0), (1, 1, 1, 1)\})$}\\ \hline
C& \multicolumn{2}{l|}{$\mathbb{Z}_2^4(\{(0, 0, 0, 1), (0, 0, 1, 0), (0, 1, 0, 0), (1, 0, 0, 0), (0, 0, 1, 1), (1, 1, 0, 1\})$}\\ \hline
C& \multicolumn{2}{l|}{$\mathbb{Z}_2^4(\{(0, 0, 0, 1), (0, 0, 1, 0), (0, 1, 0, 0), (1, 0, 0, 0), (0, 0, 1, 1), (1, 1, 0, 0)\})$ ((2,4)-Hamming)}\\ \hline
C& \multicolumn{2}{l|}{$\mathbb{Z}_2^4(\{(0, 0, 0, 1), (0, 0, 1, 0), (0, 1, 0, 0), (1, 0, 0, 0), (0, 0, 1, 1), (0, 1, 0, 1), (1, 1, 1, 0)\})$}\\ \hline \hline
D& \multicolumn{2}{l|}{$\mathbb{Z}_2^4(\{(0,0,1,1),(0,1,0,0),(0,1,0,1),(0,1,1,0),(0,1,1,1),(1,0,0,0),(1,1,0,0)\})$}\\ \hline \hline
E& \multicolumn{2}{l|}{$\mathbb{Z}_4^2(\{(0,1),(0,-1),(1,0),(-1,0),(1,1),(-1,-1)\})$ (Shrikhande)}\\ \hline
E& \multicolumn{2}{l|}{$\mathbb{Z}_4^2(\{(0,1),(0,-1),(1,0),(-1,0),(1,1),(-1,-1),(2,2)\})$}\\ \hline
\end{tabular}
\end{table}

\subsection{Order $24$}
There are 60 non-equivalent normalized Hadamard matrices of order $24$.  We will follow Sloane \cite{Sloane} and denote them by \texttt{had.24.$j$} for $j\in\{1,2,\ldots,60\}$.

There are a total of $26$ Hadamard diagonalizable graphs on 24 vertices, all of which are Cayley graphs. The graphs are given in Table~\ref{tab:had24}. Graphs are presented in complementary pairs, with multiple representations for each. The graph $12K_2$ as a subgraph of $G$ will be denoted by $M$ (a matching) and the graph $G-M$ refers to the graph obtained by the removal of the edges of a matching from $G$. Again, we indicate equivalence classes in the column `Family', where graphs from the same family are diagonalized by the same Hadamard matrices. Note that we use the notation $G\times H$ when $G\wr H$ is isomorphic to $G\square H$.

%Many of the graphs in this table have a clear generalization to produce Hadamard diagonalizable graphs of higher orders; those that do not have been marked by an asterisk. This denotes that we have not been able to determine a generalization of these graphs to a new Hadamard diagonalizable graph of some order $16k+8$, $k>1$.
%%??

\begin{table}[h]
\centering
\caption{Hadamard diagonalizable graphs of order $24$}
 \label{tab:had24}
\begin{tabular}{|c|l|lr|}\hline
Family & Graph &  Graph complement & \\  \hline \hline 
A & $K_2\wr K_{12} \cong K_{24}$ & $2K_1\times 12K_1 \cong 24K_1$ &\\ \hline 
A & $K_2\wr 12K_1\cong K_{12,12}$ & $2K_1\times K_{12}\ \cong\ 2K_{12}$ &\\ \hline \hline
B & $K_2\wr K_{6,6}\ \cong\ K_{6,6,6,6}$  & $2K_1\times 2K_6 \cong 4K_6$ &\\ \hline 
B & $K_2\wr 2K_6\ \cong \  K_{2, 2}\wr K_6$ & $2K_1\times K_{6,6}\ \cong \  2K_{6,6}$  &\\  \hline
B & $(K_2)^{\square 3}\wr K_3\ \cong \  Q_3\wr K_3$ & $(K_4\square K_2)\wr 3K_1$ & $\ast$ \\ \hline \hline 
C & $K_2\square 12K_1 \cong 12K_2$ & $K_2\wr K_{12}-M \cong CP_{24} $&\\ \hline 
C &  $K_{2}\square K_{12}$ & $K_2\wr 12K_1-M\cong  H_{12,12}$ &\\ \hline 
C &  $K_{6,6}\wr K_2$ &  $2K_6\wr 2K_1\ \cong\ 2K_{12}-M$ & \\ \hline \hline
D & $K_2\square K_{6,6}$ & $K_2\wr 2K_6 -M\cong K_{2, 2}\wr K_6-M$ & \\ \hline 
D & $K_2\square 2K_6$ & $K_2\wr K_{6,6}-M$ & \\ \hline
D &  $K_2\wr (K_6\square K_2)$ & $2(H_{6, 6})\cong 2K_{6,6}-M$ & $\ast$\\ \hline 
D & $(K_4\square K_2)\wr 3K_1 - M$  &  $Q_3\wr K_3 + M$ & $\ast$\\ \hline 
D &  $(K_4\square K_2)\wr 3K_1 + M$ &   $Q_3\wr K_3 - M$  & $\ast$ \\ \hline
\end{tabular}
\end{table}
   
% \begin{table}[h]
% \centering
% \caption{Hadamard diagonalizable graphs of order $24$}
% \label{tab:had24}
% \begin{tabular}{|c|l|}\hline
% Family&Graph and/or its complement \\ \hline \hline
% A&$K_{24}$, $24K_1$ \\ \hline
% A&$K_{12,12}$, $2K_{12}$\\ \hline \hline
% B&$4K_6$, $K_{6,6,6,6}$ \\ \hline
% B&$2K_{6,6}$, $K_{2, 2}\wr K_6$ \\ \hline
% B&$H_{4,4}\wr K_3$ \\ \hline \hline
% C& $12K_2$, $CP_{24}$ \\ \hline
% C& $2CP_{12}$, $(6K_2)\vee(6K_2)$\\ \hline
% C& $H_{12,12}$, $K_{12}\square K_2$\\ \hline \hline
% D& $K_2\square K_{6,6}$\\ \hline
% D& $H_{6,6}\vee H_{6,6}$, $2(K_2\square K_6)$\\ \hline
% D& $2H_{6,6}$, $(K_2\square K_6)\vee(K_2\square K_6)$\\ \hline
% D& $\mathbb{Z}_2{\times}\mathbb{Z}_3{\times}\mathbb{Z}_4(\{(0,\pm1,0),(0,1,\pm1),(0,2,\pm1),(0,0,\pm1),\pm(1,1,0)\})$\\ \hline
% D& $\mathbb{Z}_2{\times}\mathbb{Z}_3{\times}\mathbb{Z}_4(\{(0,0,\pm1),(0,1,0),(0,1,\pm1),(0,1,\pm2),(0,1,3),(1,0,\pm1),(1,0,3)\})$\\ \hline
% \end{tabular}
% \end{table}

We now list which graphs are associated with which Hadamard matrices as follows:
\begin{itemize}
\item $26$ graphs for \texttt{had.24.$j$} for $1\le j\le 7$ are from families A, B, C, D
\item $10$ graphs for \texttt{had.24.8} are graphs from families A, C
\item $10$ graphs for \texttt{had.24.$j$} for $9\le j\le 59$ are from families A, B
\item $4$ graphs for \texttt{had.24.60} are from family A
\end{itemize}

One interesting thing to note is that the graph $2K_1\wr (K_6\wr 2K_1)$ (i.e. two disjoint copies of the cocktail party graph $CP_{12}$) is Hadamard diagonalizable by a Hadamard matrix of order $24$, but the cocktail party graph $CP_{12}\cong K_6\wr 2K_1$ is \emph{not} diagonalized by the unique Hadamard matrix of order $12$. In fact, those graphs in the table that are marked by an asterisk can be expressed as products of smaller graphs but some of their factors are not necessarily Hadamard diagonalizable ones.
However, all remaining graphs are coming as the products of Hadamard diagonalizable graphs of smaller orders as expressed in the product notation. 
This guarantees us that there exists a Hadamard diagonalizable graph of order $16k+8$ for each $k\ge 1$ obtained as the product of $K_2\times G$ as long as there is a Hadamard diagonalizable graph of order $8k+4$. More generally, there exists a Hadamard diagonalizable graphs of order $n=2^m(8k+4)$ for each $m, k\ge 1$ given that there is one of order $8k+4$. (See Conjecture \ref{conj26} below.)

\subsection{Order $32$}
The calculation for order $32$ is much more involved, as the search space for each individual matrix grows substantially. In addition, the number of Hadamard matrices of order 32 is far greater---there are 13,710,027 inequivalent normalized Hadamard matrices of order 32. To run the computation, a program was written in \texttt{C++} and used on \texttt{nauty}~\cite{nauty} (for graph isomorphism testing) and \texttt{parallel}~\cite{parallel} (to speed up the computation). The calculation was performed on a server maintained by the Department of Applied Mathematics at Charles University in Prague.  The calculation took 179,736,390 seconds of CPU time, which was about 2 months of real time due to parallel processing.  If Conjecture~\ref{permcols} were true, the calculation would take only 2 days.  The source code, inputs, and outputs can be downloaded from \oururl; this includes all Hadamard matrices stored as strings, all Hadamard diagonalizable graphs stored as \texttt{graph6-strings}, and an additional file that can be used to determine which graphs are associated with which matrix.

There are a total of 10,196 different Hadamard diagonalizable graphs, and unlike smaller orders, many of them are not Cayley graphs. We can partition the Hadamard matrices according to which graphs they diagonalize: the result is 53,420 different equivalence classes. In Figure~\ref{fig:32plot} we mark the distribution of these equivalence classes in log-log scale; each point is an equivalence class.

\begin{figure}[!ht]
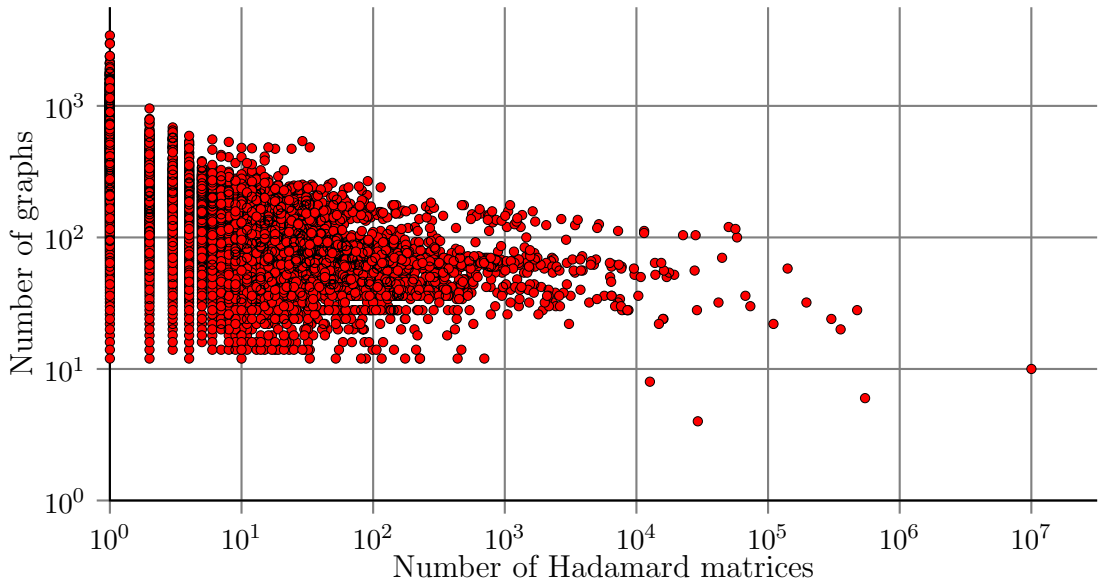

\centering
\picHISTOGRAM
\caption{The sizes of the equivalence classes of Hadamard matrices based on which graphs they diagonalize.}
\label{fig:32plot}
\end{figure}

Here are a few additional notes about the order 32 Hadamard diagonalizable graphs and the equivalence classes; much remains to be explored.
\begin{itemize}
\item The equivalence class corresponding with the fewest number of graphs is associated with only four graphs ($K_{32}$, $K_{16,16}$, $2K_{16}$, $32K_1$); there are 29,270 Hadamard matrices in this group. These Hadamard matrices \emph{only} diagonalize these four graphs---in particular, the four graphs listed here are the only ones which are universally Hadamard diagonalizable for all Hadamard matrices of order $32$.
\item The equivalence class corresponding with the greatest number of graphs is associated with 3,430 graphs, and it has a unique Hadamard matrix in the class. This Hadamard matrix is represented pictorially in Figure~\ref{fig:32matrix}(a).
\item There are 26,064 different equivalence classes which consist of a single Hadamard matrix. The number of graphs associated with these equivalence classes range from $12$ at the low end, up to 3,430 at the high end. If a matrix is associated with more than $956$ Hadamard diagonalizable graphs then it is in an equivalence class of size $1$; on the other hand there does exist an equivalence class associated with $956$ Hadamard diagonalizable graphs which has two non-equivalent Hadamard matrices in the class.
\item The largest equivalence class consists of 10,012,656 Hadamard matrices (out of a possible 13,710,027). There are ten graphs associated with this equivalence class: ($32K_1$, $4K_8$, $2K_{16}$, $K_{32}$, $2K_{8,8}$, $K_{16,16}$, $K_{8,8,8,8}$, $K_2\wr(2K_8)$, $H_{4,4}\wr K_4$, $(K_4\square K_2)\wr (4K_1)$).
\item There are $970$ Hadamard diagonalizable graphs for which each graph is associated with a \emph{unique} equivalence class; moreover for $966$ of these graphs the equivalence class has size $1$. These $966$ graphs are spread among $13$ different Hadamard matrices; $92$ of these graphs are associated with the Hadamard matrix in Figure~\ref{fig:32matrix}(a) and $224$ of these graphs are associated with the Hadamard matrix in Figure~\ref{fig:32matrix}(b). 
\end{itemize}

\begin{figure}[!ht]
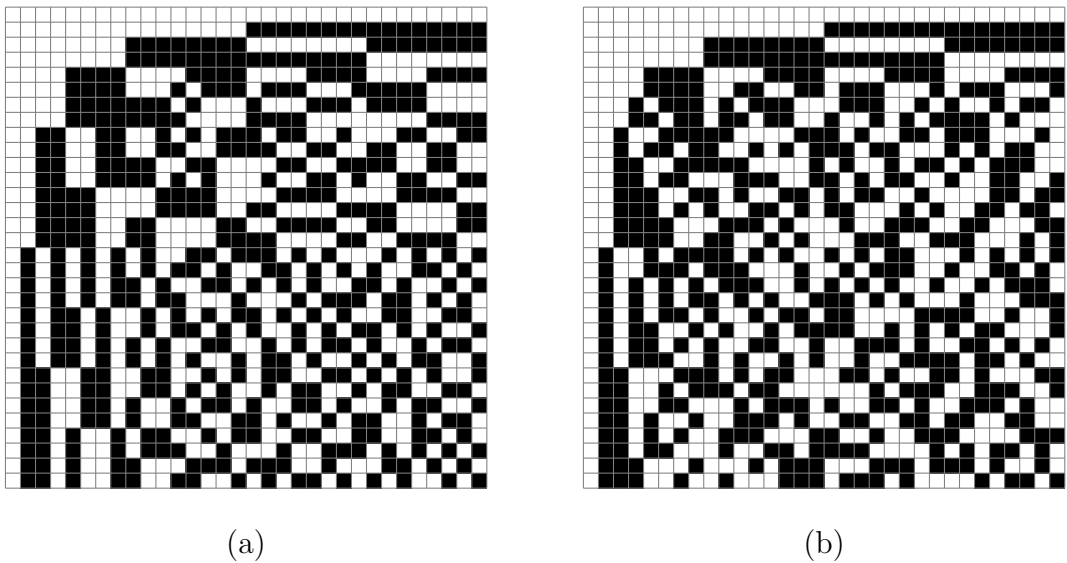

\centering
\begin{tabular}{c@{\hspace{0.5in}}c}
\picLARGE & \picDIVERSE 
\\[10pt]
(a)&(b)
\end{tabular}
\caption{Two Hadamard matrices of order $32$ presented pictorially with white cells corresponding with $1$ and black cells with $-1$. The Hadamard matrix in (a) diagonalizes $3,430$ graphs ($92$ unique to this matrix). The Hadamard matrix in (b) diagonalized $1,684$ graphs ($224$ unique to this matrix).}
\label{fig:32matrix}
\end{figure}

Among the 10,196 Hadamard diagonalizable graphs of order $32$ we have the following data. (Recall that both a graph and its complement are always diagonalized by the same Hadamard matrix.)
\begin{itemize}
\item The graphs are regular, and the degrees of the graphs, denoted $d(G)$, are distributed as follows (we only give information up through degree $15$, the rest follow by symmetry):

\begin{tabular}{|@{\,\,}r@{\,\,}||@{\,\,}c@{\,\,}|@{\,\,}c@{\,\,}|@{\,\,}c@{\,\,}|@{\,\,}c@{\,\,}|@{\,\,}c@{\,\,}|@{\,\,}c@{\,\,}|@{\,\,}c@{\,\,}|@{\,\,}c@{\,\,}|@{\,\,}c@{\,\,}|@{\,\,}c@{\,\,}|@{\,\,}c@{\,\,}|@{\,\,}c@{\,\,}|@{\,\,}c@{\,\,}|@{\,\,}c@{\,\,}|@{\,\,}c@{\,\,}|@{\,\,}c@{\,\,}|} \hline
$k$&0&1&2&3&4&5&6&7&8&9&10&11&12&13&14&15\\ \hline
$|\{G: d(G){=}k\}|$&1&1&1&2&4&6&16&29&56&101&208&343&584&877&1241&1628\\ \hline
\end{tabular}

\item The clique number of a graph, denoted $\nu(G)$, is the size of the largest complete subgraph and its distribution for this class of graphs is as follows:

\begin{tabular}{|@{\,\,}r@{\,\,}||@{\,\,}c@{\,\,}|@{\,\,}c@{\,\,}|@{\,\,}c@{\,\,}|@{\,\,}c@{\,\,}|@{\,\,}c@{\,\,}|@{\,\,}c@{\,\,}|@{\,\,}c@{\,\,}|@{\,\,}c@{\,\,}|@{\,\,}c@{\,\,}|@{\,\,}c@{\,\,}|@{\,\,}c@{\,\,}|@{\,\,}c@{\,\,}|} \hline
$k$&1&2&3&4&5&6&7&8&9&10&16&32\\ \hline
$|\{G:\nu(G){=}k\}|$&1&53&43&4115&1205&1847&443&2435&1&8&44&1\\ \hline
\end{tabular}

Note that taking complements sends cliques to independent sets and so this also gives information about the sizes of maximal independent sets in graphs.
\item There are 10,142 graphs which have girth $3$, $51$ graphs which have girth $4$, $1$ graph which has girth $6$, and $2$ graphs which have no cycles. 
\item There are $54$ disconnected graphs; among the remaining 10,142 connected graphs the diameter of the graph, denoted $\text{diam}(G)$, which is the maximal distance between two vertices is distributed as follows:

\begin{tabular}{|@{\,\,}r@{\,\,}||@{\,\,}c@{\,\,}|@{\,\,}c@{\,\,}|@{\,\,}c@{\,\,}|@{\,\,}c@{\,\,}|@{\,\,}c@{\,\,}|} \hline
$k$&1&2&3&4&5\\ \hline
$|\{G:\text{diam}(G){=}k\}|$&1&9001&1128&11&1\\ \hline
\end{tabular}

\item By Proposition~\ref{prop:regular_even} we know that the eigenvalues for the Laplacian of these graphs consist of even integers. There are 1,228 distinct spectra which are achieved. For $518$ of these graphs the spectrum is unique among these graphs (e.g.\ no other graph from among this list has the same spectrum); the remaining 9,678 graphs each have one or more cospectral mates in the list. The largest cospectral family is for the spectra $\{0^{(1)},12^{(10)},16^{(15)},20^{(6)}\}$ and $\{0^{(1)},12^{(6)},16^{(15)},20^{(10)}\}$
(here exponents represent multiplicity); each family having $528$ distinct graphs with that spectra.

The algebraic connectivity of a graph is the second smallest eigenvalue (counting multiplicity) of the Laplacian matrix of a graph. The algebraic connectivity, $\alpha(G)$, for the graphs are as follows:

\begin{tabular}{|@{\,\,}r@{\,\,}||@{\,\,}c@{\,\,}|@{\,\,}c@{\,\,}|@{\,\,}c@{\,\,}|@{\,\,}c@{\,\,}|@{\,\,}c@{\,\,}|@{\,\,}c@{\,\,}|@{\,\,}c@{\,\,}|@{\,\,}c@{\,\,}|} \hline
$k$&0&2&4&6&8&10&12&14\\ \hline
$|\{G: \alpha(G){=}k\}|$&54&56&398&604&2241&1771&3231&822\\ \hline
\end{tabular}

\begin{tabular}{|@{\,\,}r@{\,\,}||@{\,\,}c@{\,\,}|@{\,\,}c@{\,\,}|@{\,\,}c@{\,\,}|@{\,\,}c@{\,\,}|@{\,\,}c@{\,\,}|@{\,\,}c@{\,\,}|@{\,\,}c@{\,\,}|@{\,\,}c@{\,\,}|@{\,\,}c@{\,\,}|} \hline
$k$&16&18&20&22&24&26&28&30&32\\ \hline
$|\{G: \alpha(G){=}k\}|$&774&122&88&17&12&2&2&1&1\\ \hline
\end{tabular}

\item 4,130 of the graphs are vertex-transitive; $45$ of the graphs are edge-transitive; $38$ of the graphs are distance-regular; $32$ of the graphs are cographs; and $6$ of the graphs are chordal (namely those which are unions of cliques).

\end{itemize}

\section{Concluding remarks}\label{sec:conclusion}
The obstacles to moving forward with larger Hadamard matrices are the size of the computations for any individual Hadamard matrix, combined with a lack of a classification of all Hadamard matrices of order $36$ or above. However, we can run the computation on some known  Hadamard matrices of higher orders and we summarize the computation results in Table~\ref{tab:larger_runs}. %Table ID ?? 

\begin{table}[h]
\centering
\label{tab:larger_runs}
\begin{tabular}{|l|c|}\hline
Hadamard matrix&Number of H.\ graphs\\ \hline
\texttt{had.40.tpal} & $26$\\ \hline
\texttt{had.40.ttoncheviv} & $26$\\ \hline
\texttt{had.40.twill} & $26$\\ \hline
\texttt{had.48.pal} & $4$\\ \hline
\texttt{had.56.tpal2} & $26$\\ \hline
\texttt{had.56.twll} & $26$\\ \hline
\end{tabular}
\caption{Some Hadamard matrices from Sloane \cite{Sloane} and the number of graphs for which that matrix Hadamard diagonalizes the graph.}
\end{table}

For the three Hadamard matrices of order $40$ the $26$ graphs are the same; similarly, for the two Hadamard matrices of order $56$.  The data, combined with what we know for order $24$ suggests the following.

\begin{conj}\label{conj26}
For $n=16k+8, k= 1, 2, \ldots,$ there are at most $26$ distinct graphs which are Hadamard diagonalizable for some Hadamard matrix of order $n$.
\end{conj}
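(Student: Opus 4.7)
The proof strategy would extend the approach of Theorem 2.2. Recall that for $n = 8k+4$, the contradiction hinged on the identity
\[
-3n = \sum_k \lambda_k \bigl((h_k)_u + (h_k)_w\bigr)\bigl((h_k)_v + (h_k)_x\bigr),
\]
valid whenever $uvwx$ is a path of length three with $u \not\sim x$, where each summand is divisible by $8$ (since each factor is in $\{-2,0,2\}$ and $\lambda_k$ is even); this contradicted $8 \nmid 3n$. For $n = 16k+8$ the same identity now only gives divisibility by $8$, which is consistent with $n$, so we need a sharper obstruction: a combinatorial identity forcing divisibility by $16$.

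The plan is to first strengthen Lemma 2.1 by identifying a richer family of forbidden induced substructures that characterize the graphs outside the conjectural $26$-element list. Inspection of Table~\ref{tab:had24} suggests that the $26$ graphs for $n=24$ decompose as Cartesian or lexicographic products involving the four Hadamard diagonalizable graphs of order $8k+4$ guaranteed by Theorem 2.2, together with a few exceptional constructions of wreath-like type. I would attempt to show that any graph not in the conjectured list must contain one of a short list of forbidden induced subgraphs on $5$--$8$ vertices that obstructs such a product decomposition.

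Next, for each forbidden substructure, one would derive an algebraic identity of the form
\[
c \cdot n = \sum_k \lambda_k \cdot F_k,
\]
where each $F_k$ factors as a product of three numbers from $\{-2,0,2\}$ (and $\lambda_k$ is even), so each summand is divisible by $16$; if $16 \nmid cn$, a contradiction follows. Two natural sources of such identities are (a) a six-vertex configuration arranged as a ``three-rung ladder'' $\{u_1,u_2,v_1,v_2,w_1,w_2\}$ where a suitable signed combination of Laplacian entries opens up as a triple product $\prod_{a\in\{u,v,w\}}\bigl((h_k)_{a_1}+(h_k)_{a_2}\bigr)$ after expansion, and (b) the matrix powers $L^2$ and $L^3$, which are diagonalized by the same Hadamard matrix but have eigenvalues $\lambda_k^2$ and $\lambda_k^3$ divisible by $4$ and $8$ respectively, allowing the extra factors of $2$ to come from the spectrum rather than from extra vertex pairs.

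The main obstacle will be the algebraic step: the Laplacian naturally encodes only pairwise information about the graph, so extracting a genuine three-factor identity requires either moving to higher powers of $L$ (where the entries no longer lie in $\{0,-1\}$ and the combinatorial interpretation becomes subtle, as $(L^2)_{ij}$ mixes degrees with common-neighbor counts), or finding a two-factor identity over a longer path where one factor is forced to be nonzero, yielding an additional factor of $2$. A further layer of difficulty is that, unlike the $n=8k+4$ case where only four graphs escape the contradiction, here one must rule out everything outside a list of $26$ graphs, so the forbidden-substructure catalogue and the divisibility obstructions must be matched precisely, likely via additional structural lemmas characterizing the product/join structure of the conjectured graphs.
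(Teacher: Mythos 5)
First, be aware that the paper does not prove this statement: it is stated and left as Conjecture~\ref{conj26}, supported only by the computations for $n=24$, $40$, and $56$ (each yielding the same $26$ graphs) and by the authors' remark that a proof ``might follow along the lines of that carried out for $n=8k+4$.'' So there is no proof in the paper to compare yours against; what you have written is a research plan, and it must be judged as such.

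As a plan it has the right arithmetic target but two genuine gaps, which you partly acknowledge and do not close. First, the identity you need does not exist at the level of $L$: every entry $L_{ij}=\frac1n\sum_k\lambda_k(h_k)_i(h_k)_j$ is a sum of products of \emph{two} Hadamard entries, so any integer combination of Laplacian entries expands into degree-two monomials in the $(h_k)_i$, whereas a triple product $\prod_{a\in\{u,v,w\}}\bigl((h_k)_{a_1}+(h_k)_{a_2}\bigr)$ expands into degree-three monomials and therefore cannot be assembled from entries of $L$. Passing to $L^2$ or $L^3$ restores divisibility ($\lambda_k^2\equiv 0\pmod 4$) but destroys the dichotomy that drove the $n=8k+4$ proof: the off-diagonal entries of $L^2$ are common-neighbour counts rather than elements of $\{0,-1\}$, so a congruence mod $16$ on a sum of such entries does not translate into a clean forbidden-subgraph condition, and you exhibit none. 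Second, and more fundamentally, nothing in the proposal produces the number $26$. In the $n=8k+4$ case the bound $4$ fell out of Lemma~\ref{lemma:four_path}: the mod-$8$ obstruction forces every $P_4$ to close into a $C_4$, and that closure property characterizes complete and complete bipartite graphs. You would need an analogue — a local closure property, implied by a mod-$16$ obstruction, whose regular models are exactly the $26$ graphs of Table~\ref{tab:had24}, including the exceptional entries such as $Q_3\wr K_3$ and $(K_4\square K_2)\wr 3K_1\pm M$ that are not products of the four order-$(8k+4)$ graphs. No candidate for such a characterization is given, and without it the divisibility machinery has nothing to bite on. Until those two pieces are supplied this remains a plausible heuristic, consistent with the authors' own suggestion, rather than a proof.
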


A proof of this might follow along the lines of that carried out for $n=8k+4$; a disproof would likely come from computations on additional Hadamard matrices, say of order $40$ or $56$, to find additional graphs.

\bigskip
We have seen that there exist Hadamard diagonalizable graphs of order $8k+4$ for all $k<250$ except for the 13 values of $k$, namely, $k=83, 89, 111, 125, 141, 155, 173, 179, 209, 221, 239, 243$ and $245$ 
for each of which it is not known whether there exists a Hadamard matrix of order $8k+4$ as of 2018. Thus we see the existence of Hadamard diagonalizable graphs of various orders. For instance, we know that there are many Hadamard diagonalizable graphs of order 48, as 48 is factored as $2\times 24$ and $4\times 12$ and there are 26 Hadamard diagonalizable graphs of order 24, and 4 for each of order 4 and 12. On the other hand, through the computational search,
the Hadamard matrix of order $48$ from Sloane \cite{Sloane} has few Hadamard diagonalizable graphs.  When we reran the computation using a Hadamard matrix generated by \texttt{SageMath} there were $762$ distinct Hadamard diagonalizable graphs.  Given the lack of classification for Hadamard matrices of order $48$, it is not clear how to determine all Hadamard diagonalizable graphs of order $48$.
\bigskip

Finally, as a by-product, we have the following interesting observation.
We have seen that the Shrikhande graph, say $S$, shown in Table \ref{tab:had24} is the Cayley graph of  $\mathbb{Z}_4^2$ with connecting set $\{\pm(0,1), \pm(1,0), \pm(1,1)\}$. Both $S$ and the $(2,4)$-Hamming graph $H(2,4)=K_4\square K_4$ are strongly regular graph with parameters $(v,k,\lambda, \mu)=(16,6, 2,2)$. They are known as co-spectral graphs. Now as both of them are Hadamard diagonalizable graphs, their Cartesian and lexicographic products and powers are all Hadamard diagonalizable graphs. 

The Hamming graph $H(d,q)$ is isomorphic to the Cartesian product of $d$ copies of $K_q$ (i.e., $H(d,q)\cong K_q^{\square d}$), which is a distance-regular graph of diameter $d$.\footnote{Here a graph $G$ is distance-regular if for any choice of $h, i, j\ge 0$ and any $u,v\in V(G)$ with $d(u,v)=h$, the number of vertices $w\in V(G)$ such that $d(u,w)=i$ and $d(v,w)=j$ is independent of the choice of $u$ and $v$ but depends only on the choice of $h, i$ and $j$.}
The Cartesian product of $l$ copies of $S$ and one copy of Hamming graph $H(d,4)$ is known as a Doob graph $D(l,d)$ of diameter $2l+d$. The Doob graph $D(l,d)$, the Hamming graph $H(2l+d, 4)$, the Cartesian product of $l$ copies of $H(2,4)$ with $H(d,4)$ are cospectral. As a consequence, we state this as the following:

\begin{cor}
The Hamming graphs $H(d, 4)$, $d\ge 1$, and Doob graphs $D(l, d)$, $l, d\ge 1$ are all Hadamard diagonalizable graphs.
\end{cor}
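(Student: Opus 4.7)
The plan is to leverage closure of Hadamard diagonalizable graphs under the Cartesian product (noted in the paper and originally from \cite{HD1}) together with three specific small building blocks already shown to be Hadamard diagonalizable: $K_4$, the $(2,4)$-Hamming graph $H(2,4)=K_4\square K_4$, and the Shrikhande graph $S$. All three appear in Table~\ref{tab:had16} as Hadamard diagonalizable graphs on $16$ vertices (and $K_4$ on $4$ vertices is trivially diagonalizable by the Sylvester matrix of order $4$, or equivalently appears in the $n=4$ discussion).

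First I would handle the Hamming graphs. Since $H(d,4)\cong K_4^{\square d}$, a straightforward induction on $d$ using closure of the Hadamard diagonalizable property under Cartesian products gives that $H(d,4)$ is Hadamard diagonalizable for every $d\ge 1$. Concretely, if $H_1$ diagonalizes the Laplacian of $G_1$ and $H_2$ diagonalizes that of $G_2$, then $H_1\otimes H_2$ is a Hadamard matrix diagonalizing the Laplacian of $G_1\square G_2$ (this is the standard Kronecker-product argument; it is essentially the Cartesian-product analogue of the identity used inside the proof of Lemma~\ref{th:wr}, and in fact is simpler since $A(G_1\square G_2)=I\otimes A(G_2)+A(G_1)\otimes I$).

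Next I would handle the Doob graphs. By definition $D(l,d)\cong S^{\square l}\square H(d,4)\cong S^{\square l}\square K_4^{\square d}$. Since $S$ is Hadamard diagonalizable (Table~\ref{tab:had16}), induction on $l$ shows that $S^{\square l}$ is Hadamard diagonalizable; combining with the Hamming-graph case above and one final Cartesian product gives the result. The only case requiring comment is $d=0$, i.e.\ $D(l,0)=S^{\square l}$, which is handled by the same induction.

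I do not expect any genuine obstacle: once the closure under Cartesian products is invoked, the corollary reduces to citing the three base cases ($K_4$, $H(2,4)$, and $S$) that were already verified. The only step that merits care is a clean statement of the Cartesian-product closure itself; if the reader wants an explicit diagonalizing matrix, I would exhibit $H_1\otimes\cdots\otimes H_m$ where each $H_i$ is the Hadamard matrix diagonalizing the corresponding factor, and observe that the eigenvalues add appropriately, so this Kronecker product of Hadamard matrices simultaneously diagonalizes the Laplacians of all factors and hence of their Cartesian product.
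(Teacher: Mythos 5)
Your proposal is correct and follows essentially the same route as the paper: both rely on the closure of Hadamard diagonalizability under the Cartesian product (from \cite{HD1}) together with the base cases $K_4$ (equivalently $H(2,4)=K_4\square K_4$) and the Shrikhande graph $S$ from Table~\ref{tab:had16}, writing $H(d,4)\cong K_4^{\square d}$ and $D(l,d)\cong S^{\square l}\square H(d,4)$. The only cosmetic remark is that your aside about $d=0$ is unnecessary, since the corollary assumes $l,d\ge 1$.
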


\subsection*{Acknowledgements}
This work was completed in part at the 2019 Graduate Research Workshop in Combinatorics, which was supported in part by NSF grant \#1923238, NSA grant \#H98230-18-1-0017, a generous award from the Combinatorics Foundation, and Simons Foundation Collaboration Grants \#426971 (to M. Ferrara), \#316262 (to S. Hartke) and \#315347 (to J. Martin).

%\newpage


\begin{thebibliography}{99}

\bibitem{HD1}
S. Barik, S. Fallat and S. Kirkland, {On Hadamard diagonalizable graphs}, \textit{Linear Algebra Appl.} \textbf{435} (2011), 1885--1902.

\bibitem{CHD}
A.\ Chan, S.\ Fallat, S.\ Kirkland, J.\ C.-H.\ Lin, S. Nasserasr and S. Plosker, 
{Complex Hadamard diagonalizable graphs}, 
arXiv:2001.00251v1 [math.CO].

\bibitem{DC} G.\ Coutinho and C.\ Godsil, {Perfect state transfer in products and covers of graphs}. \textit{Linear and Multilinear Algebra}, \textbf{64 }(2016), 235--246.

\bibitem{HD2}
N. Johnston, S. Kirkland, S. Plosker, R. Storey and X. Zhang, {Perfect quantum state transfer using Hadamard diagonalizable graphs}, \textit{Linear Algebra Appl.} \textbf{531} (2017), 375--398.

\bibitem{PST}
A.\ Kay, {Basics of perfect communication through quantum networks}, \textit{Phys. Rev.} A \textbf{84} (2011), 022337.

\bibitem{HD3}
H.\ Kharaghani and S.\ Suda, {Balancedly splittable Hadamard matrices}. 
\textit{Discrete Mathematics}, \textbf{342} (2019), 546--561.

\bibitem{nauty}
B. D. McKay and A. Piperno, {Practical graph isomorphism II}, 
\textit{Journal of Symbolic Computation}, \textbf{60} (2014), 94--112.

\bibitem{Sloane}
N. Sloane, \emph{A Library of Hadamard Matrices}, \url{http://neilsloane.com/hadamard/}

\bibitem{parallel}
O. Tange, \emph{GNU Parallel - The Command-Line Power Tool}, \textit{The USENIX Magazine}, February (2011) 42--47.

\bibitem{We}   B. Weisfeiler (Ed.): \textit{On Construction and Identification of Graphs}. Berlin: Springer-Verlag (1976).

\end{thebibliography}
\end{document}